\newtheorem{Thm}{Theorem}[section]
\newtheorem{Lem}[Thm]{Lemma}
\newtheorem{Conj}[Thm]{Conjecture}
\newtheorem{``Conj"}[Thm]{``Conjecture"}
\theoremstyle{remark}
\newtheorem{Rem}[Thm]{Remark}
\newtheorem{Ex}[Thm]{Example}
\theoremstyle{definition}
\newtheorem{Def}[Thm]{Definition}
\newtheorem{Notation}{Notation}
\newtheorem*{ack}{Acknowledgements}
\newtheorem{claim}{Claim}
\newcommand{\lct}{\mathop{\mathrm{lct}}\nolimits}
\begin{document}

\title[On arithmetic K-stability]
{Minimization of Arakelov K-energy 
for many cases}

\author{Masafumi Hattori, Yuji Odaka}
\date{\today}

\keywords{Arakelov geometry, K-stability, 
K\"ahler geometry}

\subjclass[2020]{14E30, 14G40, 32Q26}

\maketitle

\begin{abstract}
We prove that for 
various polarized varieties over $\overline{\mathbb{Q}}$, 
which broadly includes K-trivial case, K-ample case, Fano case, minimal models, certain classes of fibrations, 
certain metrized ``minimal-like" models minimizes 
the Arakelov theoretic analogue of the Mabuchi K-energy,  
as conjectured in \cite{OFal}. 
This is an Arakelov theoretic analogue of \cite{H22}. 
\end{abstract}

\section{Introduction}

The K-stability of polarized varieties was originally designed to 
give an algebro-geometric counterpart of the existence of canonical K\"ahler metrics \cite{Tian, Donaldson} (see \S \ref{Kst.sec} for more details). The second author introduced arithmetic framework for K-stability in \cite{OFal}, 
which discusses certain modular heights of polarized varieties  $(X,L)$ over 
$\overline{\mathbb{Q}}$, which for instance conjecturally allows 
generalization of Faltings heights of abelian varieties \cite{Fal}. The plan is to achieve it as the infimum 
or minimum of what \cite{OFal} calls 
{\it Arakelov K-energy} or {\it K-modular height} 
which depends on metrized models. 

\cite[Conjecture 3.12, 3.13]{OFal} (see our 
Conjecture \ref{aYTD}) means 
to characterize the models which attain such 
minimum, whose partial resolution is the aim of this paper. 
It is done by 
fitting the theory of ``special K-stability" by the first author \cite{H22} in usual algebraic geometry (cf., \S \ref{spKst.sec}), 
to the arithmetic framework \cite{OFal}, with some differential geometric inputs as \cite{CS, Gao,  Zhang}. 

\begin{Notation}[Arithmetic setup]\label{not.ar}
We slightly change notation from \cite{OFal} to fit more to \cite{H22}. 
Let $F$ be a number field, $X_{\eta}$ a $n$-dimensional smooth projective variety over $F$ 
and $L_{\eta}$ an ample line bundle (polarization) on it. 
We consider an ample-polarized normal projective model $(X,L)$ over $\mathcal{O}_F$, the ring of integers in $F$, 
with the generic fiber $(X_{\eta},L_{\eta})$ possibly after the extention of scalars i.e., 
replacing $F$ by its finite extension. $(X_{\mathbb{C}},L_{\mathbb{C}})$ denotes 
the base change 
$(X_{\eta},L_{\eta})\times_{F}\mathbb{C}$ 
and the reduction of $(X,L)$ over a prime ideal $\mathfrak{p}$ of $\mathcal{O}_F$ as 
$(X_{\mathfrak{p}},L_{\mathfrak{p}})$. 

We write a hermitian metric of $L_{\mathbb{C}}$ 
of real type, as 
$h_L$ and its corresponding $1$-st Chern form 
as $\omega_{h_L}$ which we assume to be positive definite. 
The pair $(L,h_L)$ is often denoted as $\overline{L}$ 
or $\overline{L}^{h_L}$. 
The dual of a line bundle is denoted by ${}^{\vee}$. 
\end{Notation}

When we focus on the complex place or 
(positive characteristic) reduction, 
we use different notations to be set 
as Notation \ref{not.C} and \ref{not.poschar} later. 

\pagebreak
The following is our main object to study. 

\begin{Def}[{\cite[\S 2]{OFal}}]\label{hK}
We define 
{\it the Arakelov-K-energy} (or {\it K-modular height}) as 
\begin{align*}
&h_{K}^{\rm Ar}(X,L,h_L)\\
&:=\frac{1}{[F:\mathbb{Q}]}
\biggl\{-\frac{n(L_{\eta}^{n-1}.K_{X_\eta})}{(n+1)(L_{\eta}^{n})}(\overline{L}^{h_L})^{n+1}
+\dfrac{((\overline{L}^{h_L})^{n}.\overline{K_{\mathcal{X}/\mathcal{O}_F}}^{\it Ric(\omega_{h_L})})}
{(L_{\eta}^{n})}
\biggr\}.
\end{align*}
In the above, the superscript 
$\it Ric(\omega_{h_L})$ means the metrization of $K_{X(\mathbb{C})}$ which corresponds 
to the Monge-Ampere measure $\omega_{h_L}^n$. 
The above is slightly different from \cite[Definition 2.4]{OFal} by a normalizing constant 
$(n+1)(L_{\eta})^n$. 
\end{Def}

We excerpt a part of the series of conjectures in \cite{OFal} as follows, which is what we partially prove 
in this paper. This is somewhat analogous to 
the CM minimization conjecture, 
introduced by the second author 
(cf., \cite{OCM1}, \cite{OCM2}), though implicitly also 
combined a little with usual Yau-Tian-Donaldson 
conjecture (Conjecture \ref{YTD}). 

\begin{Conj}[Arithmetic Yau-Tian-Donaldson conjecture \cite{OFal}]\label{aYTD} 
We fix a normal polarized projective variety $(X_{\eta},L_{\eta})$ over a 
number field $F$. 

Then, we consider all the metrized polarized normal models $(X,L,h_L)$ 
(in the sense of above Notation \ref{not.ar}) 
over $\mathcal{O}_{F'}$ 
where $F'$ also runs over all finite extensions of $F$. Then, 
$h_K(X,L,h_L)$ attains their minimum if and only if 
\begin{enumerate}
\item all the reductions $(X_{\mathfrak{p}},L_{\mathfrak{p}})$ are K-semistable, 
\item $\omega_{h_L}$ is a K\"ahler form with constant scalar curvature. 
\end{enumerate}
\end{Conj}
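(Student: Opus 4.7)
The plan is to decompose the Arakelov K-energy $h_K^{\rm Ar}$ from Definition \ref{hK} into an archimedean contribution at the complex place and a sum of non-archimedean contributions, one for each prime $\mathfrak{p}$ of $\mathcal{O}_F$. This splitting is a consequence of the product-formula interpretation of Arakelov intersection numbers. The archimedean part turns out to be essentially the Mabuchi K-energy $\mathcal{M}(\omega_{h_L})$ of $(X_{\mathbb{C}}, L_{\mathbb{C}})$, normalized by the leading coefficients appearing in Definition \ref{hK}. Each non-archimedean summand depends only on the structure of the integral model $(X,L)$ near $\mathfrak{p}$, and in particular is invariant under changes of $h_L$.

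At the archimedean place, I would invoke the theorem of Chen--Cheng together with the differential-geometric inputs of \cite{CS, Gao, Zhang} cited in the introduction to conclude that the Mabuchi K-energy is bounded below and attains its infimum exactly when $\omega_{h_L}$ is a constant scalar curvature K\"ahler form. This gives condition (ii) as the archimedean minimization criterion, and it is ``decoupled'' from the model structure in the sense that one may vary $h_L$ without touching the non-archimedean contributions.

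At each finite place $\mathfrak{p}$, the analysis follows the CM minimization philosophy of \cite{OCM1, OCM2}. A birational modification of $(X,L)$ supported over $\mathfrak{p}$ produces, after rescaling, a test configuration of the reduction $(X_\mathfrak{p}, L_\mathfrak{p})$, and the induced change in the non-archimedean contribution is, up to a positive multiple, the Donaldson--Futaki invariant of that test configuration. Therefore, the local non-archimedean contribution is minimized over the possible models at $\mathfrak{p}$ if and only if $(X_\mathfrak{p}, L_\mathfrak{p})$ is K-semistable. Combined with the previous step and the independence of archimedean versus non-archimedean variations, a global minimizer exists precisely when (i) and (ii) hold simultaneously.

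The principal obstacle is the non-archimedean ``only if'' direction: one must show that failure of K-semistability at some $\mathfrak{p}$ forces the existence of a strictly improving birational modification of the model at $\mathfrak{p}$, without disturbing either the archimedean minimizer or the other finite places. The special K-stability framework of \cite{H22} is designed precisely for this purpose, since it reduces K-stability testing to special test configurations arising from MMP, which lift canonically to birational modifications of the arithmetic model and produce honest metrized models over $\mathcal{O}_{F'}$ after a suitable base change. A secondary difficulty is that for polarizations which are neither K-ample, Fano, nor K-trivial, the Chen--Cheng type existence results need not apply directly, so one must work with weaker surrogates (reduced uniform J-stability, energy functionals along geodesic rays in the space of K\"ahler potentials, etc.); this is presumably why the theorem established in the paper handles ``many cases'' rather than the full conjecture.
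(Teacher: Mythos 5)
Your overall decoupling picture (archimedean part is the Mabuchi K-energy, changing $h_L$ only affects that part, each non-archimedean prime contributes separately and changing the model at one prime only affects its local contribution) is exactly the strategy the paper adopts: in the proof of Theorem \ref{hK.min} the models are compared one place at a time and \eqref{key-anal} uses the change-of-metric formula plus cscK minimization of the Mabuchi energy. However, there are two substantive problems with your proposal.

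First, the statement is a \emph{conjecture}, which the paper does not prove. It only partially establishes the ``if'' direction, and only under additional hypotheses at each place that are modeled on the special K-stability of \cite{H22}. You gesture at this at the very end, but your plan as written targets the full biconditional (including the ``only if'' direction, for which the paper offers nothing), and you should be explicit that your argument cannot go further than the paper's Theorem \ref{hK.min} without new input.

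Second, and more seriously, your description of how the non-archimedean step works misses the real difficulties. You assert that the change in the local contribution at $\mathfrak{p}$ is, up to a positive multiple, the Donaldson--Futaki invariant of a test configuration of $(X_\mathfrak{p},L_\mathfrak{p})$, so K-semistability of the reduction finishes the job. But $\Spec \mathcal{O}_{F,\mathfrak{p}}$ is not a proper curve, so the intersection-theoretic identity one wants (the analogue of \cite[Theorem 3.5]{H22}) does not apply directly; the paper proves a fresh version (Claim \ref{claim-2}) via Fujita vanishing (Lemma \ref{lem-fujita}), length computations, and a dominated-convergence limit. Moreover the hypothesis actually used is not ``the reduction is K-semistable,'' but a positive-characteristic analogue of special K-semistability expressed through the Frobenius $\delta$-invariant $\delta^F(\overline{L_\mathfrak{p}})$ (Definition \ref{Frob.delta}), $\mathrm{J}^{K+\delta^F L}$-semistability of the reduction, and local F-purity; the proof then splits the twisted Arakelov K-energy into an Arakelov J-energy term, handled via filtrations and Lemma \ref{lem-j-filt}, and an Arakelov Ding-functional term, handled via the F-pure inversion of adjunction of Sato--Takagi, together with the comparison Lemma \ref{lem-blum-liu-toy} relating $\delta^F$ at $\mathfrak{p}$ to $\delta$ at the complex place. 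Finally, your claim that special K-stability ``reduces K-stability testing to special test configurations arising from MMP'' misstates Definition \ref{specialKss}: special K-(semi)stability is defined via the $\delta$-invariant and uniform J-(semi)stability with respect to $K_X+\delta L$, not via MMP-special test configurations, and this is essential for the decomposition into J-energy plus Ding-functional that the proof relies on.
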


Recall that the attained minimum above for abelian varieties case is 
essentially the Faltings height \cite{Fal}, modulo some simple constants, 
as confirmed in \cite{OFal}. 

\vspace{2mm}
Recently, as we briefly review at the subsection 
\S \ref{spKst.sec}, 
the first author \cite{H22} introduces the notion of 
``{\it special K-stability}" 
which, nevertheless of its name, include many cases.  The notion is defined by using J-stability (cf., \S \ref{Jst.sec}, 
\cite{H21}) 
and the  
$\delta$-invariant (cf., \S \ref{delta.sec}, 
\cite{FO, BJ}) 
in the field of K-stability. 
Then, the first author showed the special K-(semi)stability 
implies the usual K-(semi)stability \cite{H22a}
(see \S \ref{Kst.sec} for more details). 

\vspace{2mm}
Our main theorem \ref{hK.min} is 
roughly as 
follows, which partially confirms the ``if" direction 
of the above Conjecture \ref{aYTD}. 
\begin{Thm}[{Main Theorem ($=$Theorem \ref{hK.min})}]\label{hK.min.intro}
If a metrized polarized model $(X,L,h_L)/\mathcal{O}_F$ 
satisfies analogues of special K-stability 
over any place of $F$, 
$h_K(X,L,h_L)$ attains the minimum 
for the fixed geometric generic fiber 
$(X,L)\times_{F}\overline{\mathbb{Q}}$ over 
$\overline{\mathbb{Q}}$. 
\end{Thm}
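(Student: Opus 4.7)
\emph{Overall strategy.} The plan is to compare the fixed metrized model $(X,L,h_L)$ with an arbitrary competitor $(X',L',h'_{L'})$ over $\mathcal{O}_{F'}$ (after extending $F$ if necessary) and to establish
\[
h_{K}^{\mathrm{Ar}}(X',L',h'_{L'}) \;-\; h_{K}^{\mathrm{Ar}}(X,L,h_L) \;\geq\; 0.
\]
First I would pass to a common normal dominating model $\widetilde{\mathcal X}/\mathcal{O}_{F'}$ with proper birational morphisms to both $X_{F'}$ and $X'$. Pulling back both polarizations together with their metrics to $\widetilde{\mathcal X}$, the difference decomposes by standard Arakelov intersection theory into an archimedean contribution that depends only on the pair of hermitian metrics $(h_L, h'_{L'})$ on $L_{\mathbb{C}}$, plus contributions at each finite place $\mathfrak p$ of $F'$ expressed as ordinary intersection numbers supported on $\widetilde{\mathcal X}_{\mathfrak p}$.

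\emph{Archimedean place.} By the design of Definition~\ref{hK}, the archimedean contribution is (up to a positive normalization) precisely the classical Mabuchi K-energy $M_{\omega_{h_L}}(\omega_{h'_{L'}})$. The analytic analogue of special K-stability at the complex place, combined with the differential-geometric inputs of \cite{CS, Gao, Zhang} (cscK existence together with the J-equation and $\delta$-invariant estimates which the first author uses to define special K-stability), forces $\omega_{h_L}$ to be a constant scalar curvature Kähler metric in $c_1(L_{\mathbb{C}})$. Since cscK metrics globally minimize the Mabuchi K-energy, via the Berman--Berndtsson convexity along weak geodesics, the archimedean contribution is non-negative.

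\emph{Non-archimedean places.} At each finite place $\mathfrak p$ of $F'$, the comparison of the two models localized at $\mathfrak p$ produces a (not necessarily normal) test configuration of the reduction $(X_{\mathfrak p},L_{\mathfrak p})$, and the $\mathfrak p$-contribution to the difference equals, up to a universal positive multiple, the associated Donaldson--Futaki invariant. By hypothesis the analogue of special K-stability holds at $\mathfrak p$, and by the first author's implication \cite{H22a} this forces ordinary K-semistability of $(X_{\mathfrak p},L_{\mathfrak p})$. Hence $\mathrm{DF}\geq 0$ for every such test configuration, and summing the archimedean and all finite-place contributions yields the theorem.

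\emph{Main obstacle.} The principal difficulty is that the competitor $X'$ may have special fibers bearing no a priori relation to those of $X$, and the mod-$\mathfrak p$ ``test configuration'' produced from $\widetilde{\mathcal X}$ need not be one of the standard normal test configurations to which the special K-semistability machinery directly applies; one must argue that the comparison is nevertheless covered by \cite{H22a}, presumably by approximating with normal test configurations and exploiting lower semicontinuity of the Donaldson--Futaki invariant. A second, more computational obstacle is to verify that the Ric-normalization of $\overline{K_{\mathcal X/\mathcal{O}_F}}^{\mathrm{Ric}(\omega_{h_L})}$ transforms correctly under the change of metric $h_L\rightsquigarrow h'_{L'}$, so that the entropy-type terms it produces at the archimedean place assemble exactly into the Mabuchi K-energy; this is the computation underlying \cite{OFal} but has to be carried out in the present setting where the integral models may fail to be isomorphic over $F'$.
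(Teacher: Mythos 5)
Your archimedean step is essentially correct and matches the paper: the change-of-metric contribution is controlled by the fact that cscK metrics minimize the Mabuchi K-energy, and the paper makes exactly this move in establishing the first inequality $h_K(X,L,h_{\mathrm{cscK}})\le h_K(X,L,h')$.

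The non-archimedean step, however, contains a genuine gap. You claim that localizing two ample-polarized models over $\mathcal{O}_{F,\mathfrak{p}}$ produces ``a (not necessarily normal) test configuration of the reduction $(X_\mathfrak{p},L_\mathfrak{p})$'' whose Donaldson--Futaki invariant equals the $\mathfrak{p}$-contribution to the difference of K-modular heights, and then you invoke ``special K-stability $\Rightarrow$ K-semistability $\Rightarrow$ DF $\ge 0$''. This identification is not correct: a pair of flat models over a DVR with a common dominating model does not carry a $\mathbb{G}_m$-action and is not a test configuration in the sense of Definition~\ref{DF.def}, so there is no Donaldson--Futaki invariant to attach. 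The paper deals with exactly this issue (it is the ``main obstacle'' you flag, but you do not actually overcome it) in a different way: it first reduces to the case where the two models differ at a single finite place, then extracts from the two local models a \emph{filtration} $\mathscr{F}$ of $\bigoplus_m H^0(X_{\mathfrak p},L_{\mathfrak p}^{\otimes m})$ as in \eqref{eq}, and works with the non-archimedean J-energy $\mathcal{J}^{H,\mathrm{NA}}(\mathscr{F})$ of this filtration rather than with any DF invariant. Moreover, the argument does not pass through the implication ``special K-stability $\Rightarrow$ K-semistability'' of \cite{H22a} at all; instead it exploits the \emph{decomposition} of the (twisted) Arakelov K-energy into an Arakelov J-energy and an Arakelov Ding functional via Lemma~\ref{2.12.analogue}, proves non-negativity of the J-part from the $\mathrm{J}^{K+\delta^F L}$-semistability hypothesis together with Claim~\ref{claim-2}, and proves non-negativity of the Ding-part from the $\delta^F$-invariant hypothesis and Sato--Takagi's F-singularity inversion of adjunction \cite[Theorem 3.8]{SatoTakagi}. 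There is also a separate Case 2 for $\delta^F(\overline{L_{\mathfrak p}})=0$ which your proposal does not account for. So while your global outline (cscK at $\infty$, local positivity at finite places) points in the right direction, the mechanism you propose at the finite places would not go through as stated, and the decomposition into J- and Ding-type contributions --- the actual engine of the proof --- is missing.
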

There are many classes of 
polarized varieties which 
have a model which satisfies the 
above-mentioned condition, 
such as 
K-trivial case, K-ample case, K-stable Fano varieties case, minimal models and 
some fibrations for instance. 
Thus, the above theorem at least broadly 
generalizes \cite[Theorem 3.14]{OFal}. 
For instance, 
the cases of 
minimal models and 
certain 
algebraic 
fibrations 
are newly included (compare  \cite{H21}). 
Large parts of this paper are devoted 
to preparations to 
rigorously formulate (and prove) Theorem \ref{hK.min.intro}, finally 
resulting to the main theorem \ref{hK.min}. Some 
review of the background is also contained in the next section \S \ref{prelim} 
for the readers convenience. 

Our discussion of the K-modular height 
(Definition \ref{hK}) 
and its variant is based on the framework of \cite{OFal}, which uses the Gillet-Soul\'e 
intersection theory \cite{GS} and its developments. The main discussion of 
the proof of Theorem \ref{hK.min} closely follow 
\cite{H22}. Thus, we also refer to them 
for more details of the background. 

\section{Preliminaries}\label{prelim}

This section consists of three subsections. 
The first \S \ref{Kst.sec} 
briefly reviews some basics of K-stability for convenience of readers, introducing the J-stability and the 
$\delta$-invariant as well. 
They are both recent useful tools to study K-stability. 
The latter two subsections 
\S \ref{twist.sec}, \S \ref{Frob.sec} 
are technical results prepared 
for our main theorem in the 
next section \S \ref{MThm.sec}. 
The materials from the section 
\ref{Frob.sec} and later are new. 

\subsection{Review of K-stability}\label{Kst.sec}

In this subsection, we review the usual K-stability in the 
complex geometric setup. 
For that, we first re-set the 
notation for this subsection \S \ref{Kst.sec}: 

\begin{Notation}[Complex geometric setup]\label{not.C}
We consider a polarized smooth projective 
variety 
$(X_{\mathbb{C}},L_{\mathbb{C}})$ 
over $\mathbb{C}$ 
which, in this subsection, 
does not necessary descends over 
$\overline{\mathbb{Q}}$ as Notation \ref{not.ar}. 
As in Notation \ref{not.ar}, we denote 
a hermitian metric of $L_{\mathbb{C}}$, as 
$h_L$ and its corresponding $1$-st Chern form as $\omega_{h_L}$ which we assume to be positive definite i.e., a K\"ahler form. 
For a smooth real function $\varphi$ 
on $X(\mathbb{C})$, we set 
$\omega_{\varphi}
:=\omega_{h_L}
+\sqrt{-1}
\partial\overline{\partial}\varphi. 
$

\end{Notation}

\subsubsection{K-stability $($\cite{Tian,Donaldson}$)$}
Now, we review the definition of 
the K-stability. 
\begin{Def}
A {\textit{test configuration}} of $(X_{\mathbb{C}},L_{\mathbb{C}})$ 
of the exponent $r(\in \mathbb{Z}_{>0})$ means 
a projective scheme $\mathcal{X}_{\mathbb{C}}$ 
flat over $\mathbb{P}^1_{\mathbb{C}}$, a relatively ample line bundle $\mathcal{L}_{\mathbb{C}}$ 
on $\mathcal{X}_{\mathbb{C}}$, $\mathbb{G}_m$-action on 
$(\mathcal{X}_{\mathbb{C}},\mathcal{L}_{\mathbb{C}})$ 
together with a $\mathbb{G}_m$-equivariant 
isomorphism 
\begin{equation*}
(\mathcal{X}_{\mathbb{C}},\mathcal{L}_{\mathbb{C}})|_{(\mathbb{P}^1_{\mathbb{C}}\setminus \{0\})}
\simeq (X_{\mathbb{C}},
L_{\mathbb{C}}^{\otimes r})\times (\mathbb{P}^1_{\mathbb{C}}\setminus \{0\}). 
\end{equation*}
We simply denote this set of data, 
forming a test configuration, as 
$(\mathcal{X}_{\mathbb{C}},\mathcal{L}_{\mathbb{C}})$. 
\end{Def}

\begin{Def}[\cite{Donaldson, Wang, ODF}]
\label{DF.def}
The 
{\textit{Donaldson-Futaki invariant}} 
${\rm DF}(\mathcal{X}_{\mathbb{C}},\mathcal{L}_{\mathbb{C}})$
of a test configuration 
$(\mathcal{X}_{\mathbb{C}},\mathcal{L}_{\mathbb{C}})$, where $\mathcal{X}_{\mathbb{C}}$ is normal, 
is defined as 
$$
\dfrac{-n(L_{\mathbb{C}}^{n-1}.
K_{X_{\mathbb{C}}})}{(n+1)(L_{\mathbb{C}})^n}
(\mathcal{L}_{\mathbb{C}})^{n+1}
+
r(\mathcal{L}_{\mathbb{C}}^{n}.
K_{\mathcal{X}_{\mathbb{C}}/\mathbb{P}^1_{\mathbb{C}}}). 
$$
Note that, thanks to 
the homogeneity of the above, 
it is convenient to 
replace $\mathcal{L}$ by 
$\mathcal{L}/r$ as a $\mathbb{Q}$
-line bundle (of exponent $1$). 
We say 
$(X_{\mathbb{C}},L_{\mathbb{C}})$ 
is {\textit{K-stable}} 
(resp., {\textit{K-semistable}}) 
if 
they are positive unless 
$\mathcal{X}_{\mathbb{C}}$ 
is $X_{\mathbb{C}}\times 
\mathbb{P}^1$ 
(resp., 
they are always non-negative). 
We also say 
$(X_{\mathbb{C}},L_{\mathbb{C}})$ 
is {\textit{K-polystable}} 
if 
they are positive unless 
$\mathcal{X}_{\mathbb{C}}$ 
is a $X_{\mathbb{C}}$-fiber bundle over 
$\mathbb{P}^1$. 
\end{Def}
The Donaldson-Futaki invariant 
is recently also called 
{\it non-archimedean Mabuchi energy} (cf., \cite{BHJ}) 
modulo a technical slight difference. 
Also, note that our definition of the K-modular height 
(Definition \ref{hK}) is designed after the above 
intersection number formula. 
The original motivation for K-stability is 
the following well-known 
conjecture in complex geometry. 

\begin{Conj}[The Yau-Tian-Donaldson conjecture \cite{Donaldson}]
\label{YTD}
For any polarized smooth complex 
projective 
variety 
$(X_{\mathbb{C}},L_{\mathbb{C}})$, 
it 
is K-polystable if and only if 
$X_{\mathbb{C}}$ admits a 
constant scalar curvature K\"ahler metric 
of the K\"{a}hler class 
$c_1(L_{\mathbb{C}})$. 
\end{Conj}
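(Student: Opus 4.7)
The plan is to use the Mabuchi K-energy $M\colon\mathcal{H}(L_{\mathbb{C}})\to\mathbb{R}$ on the space of K\"ahler potentials as a bridge between the analytic and algebro-geometric sides. The functional $M$ is characterized by the property that $\omega_\varphi$ is cscK iff $\varphi$ is a critical point, while its asymptotic slope along the (weak) geodesic ray associated to a test configuration $(\mathcal{X}_{\mathbb{C}},\mathcal{L}_{\mathbb{C}})$ recovers $\DF(\mathcal{X}_{\mathbb{C}},\mathcal{L}_{\mathbb{C}})$, by Donaldson's slope formula as refined by Boucksom--Hisamoto--Jonsson and Sj\"ostr\"om Dyrefelt. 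Both sides of the equivalence then become statements about $M$: existence of a cscK metric $\Leftrightarrow$ minimization of $M$ on $\mathcal{H}$, and K-polystability $\Leftrightarrow$ strict positivity of asymptotic slopes of $M$ along non--product-type rays.

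For the ``only if'' direction (cscK $\Rightarrow$ K-polystability) I would follow the Berman--Berndtsson--Darvas--He--Lu program: convexity of $M$ along weak geodesics together with the existence of a cscK minimizer forces $M$ to be bounded below on the finite-energy completion $\mathcal{E}^1(X_{\mathbb{C}},L_{\mathbb{C}})$; the slope formula $\lim_{t\to\infty}M(\varphi_t)/t=\DF(\mathcal{X}_{\mathbb{C}},\mathcal{L}_{\mathbb{C}})$ then yields $\DF\geq0$, and an equality rigidity analysis using uniqueness of cscK up to $\Aut^0$ upgrades semistability to polystability. This direction is essentially established in the literature. For the ``if'' direction I would invoke Chen--Cheng's theorem: a cscK metric exists iff $M$ is \emph{analytically proper} modulo $\Aut^0(X_{\mathbb{C}},L_{\mathbb{C}})$. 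The remaining step is to promote K-polystability to this analytic properness via Boucksom--Jonsson non-Archimedean pluripotential theory: first upgrade polystability to \emph{reduced uniform K-stability} (coercivity of the non-Archimedean Mabuchi functional modulo $\Aut^0$) through a compactness argument on the space of non-Archimedean potentials, then transfer this non-Archimedean coercivity to analytic coercivity of $M$ by regularization and density. In the special classes considered in \cite{H22} (K-ample, K-trivial, Fano, suitable fibrations) one can short-circuit this route and instead invoke Aubin--Yau when $K_X$ is semi-negative, Chen--Donaldson--Sun (or the variational Berman--Boucksom--Jonsson approach) in the Fano case, and fiberwise cscK constructions together with adiabatic limits for fibrations.

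The hardest step, and the real bottleneck in this plan, is the promotion of qualitative K-polystability (one strict inequality per non--product test configuration) to the quantitative uniform coercivity of $M$ demanded by Chen--Cheng. K-polystability treats each test configuration in isolation, whereas coercivity requires uniform control over arbitrary sequences of potentials, and closing this gap in full generality seems to require a non-Archimedean compactness theorem that itself may force a strengthened hypothesis (reduced uniform K-stability) or extra structural assumptions on $(X_{\mathbb{C}},L_{\mathbb{C}})$. My proof would therefore realistically establish the equivalence either under such a strengthened stability hypothesis or within the special cases above, leaving the implication ``polystability $\Rightarrow$ cscK'' in its fully general form as the open core of YTD.
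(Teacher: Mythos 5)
The statement you are addressing is labeled \emph{Conjecture} \ref{YTD} in the paper: it is the Yau--Tian--Donaldson conjecture itself, quoted from \cite{Donaldson} purely as motivation for the arithmetic analogue (Conjecture \ref{aYTD}). The paper offers no proof of it, and none exists in the literature in the generality stated. Your proposal is an accurate survey of the current state of the art --- convexity of the Mabuchi K-energy along weak geodesics and the slope formula of Boucksom--Hisamoto--Jonsson for the ``only if'' direction, Chen--Cheng's properness criterion \cite{CC} for the ``if'' direction --- but, as you yourself concede in the final paragraph, it does not close the essential gap: passing from K-polystability (a separate strict inequality for each non-product test configuration) to the uniform coercivity of the K-energy modulo $\Aut^0$ that Chen--Cheng require. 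That step is precisely the open core of the conjecture, and no compactness or regularization argument currently available in non-Archimedean pluripotential theory is known to supply it without strengthening the hypothesis to (reduced) uniform K-stability. So what you have written is a correct roadmap with an explicitly unproved central step, not a proof.

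For the purposes of this paper, note that the authors never need Conjecture \ref{YTD} in full: the main theorem (Theorem \ref{hK.min}) takes the existence of a cscK metric at the complex place as a \emph{hypothesis} (condition \eqref{complex.asm}), and the remark following it points out that in the specially K-stable case this existence is supplied by the resolved \emph{uniform} version of YTD due to \cite{Zhang2} and \cite{CC} --- exactly the strengthened-hypothesis shortcut your last paragraph identifies. Your observation that the special classes of \cite{H22} short-circuit the general conjecture is therefore exactly the role the statement plays in the paper; but it remains a conjecture, not a theorem, and your proposal should not be read as establishing it.
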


\subsubsection{J-stability}\label{Jst.sec}
The J-stability of polarized variety is a 
certain toy-model analogue of the K-stability, 
originally named after the J-flow of Donaldson \cite{Don99} (cf., also 
\cite{Chen}). 
After Notation \ref{not.C}, 
we further consider 
another auxiliary ample $\mathbb{R}$-line bundle 
$H$ of $X_{\mathbb{C}}$. 

The differential geometric counterpart 
of the J-stability (Definition 
\ref{Jst.def}) 
is the following so-called 
{\it J$^\chi$-equation} 
\begin{equation}
\label{Jeqnn}
\mathrm{tr}_{\omega}(\chi)= \text{constant},
\end{equation}
where ${\rm tr}_{\omega}$ means 
the trace with respect to $\omega$. 
See e.g., \cite{LS, Gao, 
DatarPingali, So, H21} for more detailed context. Here, we only briefly review it at the level we use in this paper.  

\begin{Def}[J-stability]\label{Jst.def}
For a test configuration 
$(\mathcal{X}_{\mathbb{C}},\mathcal{L}_{\mathbb{C}})$
of a polarized variety $(X_{\mathbb{C}},L_{\mathbb{C}})$, 
we take a resolution of 
indeterminancy of birational map 
$X_{\mathbb{C}}
\times \mathbb{P}^1
\dashrightarrow 
\mathcal{X}_{\mathbb{C}}$ 
as 
$$X_{\mathbb{C}}
\times \mathbb{P}^1
\xleftarrow{p}\mathcal{Y}
\xrightarrow{q}
\mathcal{X}_{\mathbb{C}},$$ 
so that $p$ and $q$ are morphisms. 
We also denote the 
first projection 
$X_{\mathbb{C}}
\times \mathbb{P}^1\to 
X_{\mathbb{C}}$ as $p_1$. 
Then 
we define 
$$\mathcal{J}^{H,\mathrm{NA}}
(\mathcal{X}_{\mathbb{C}},\mathcal{L}_{\mathbb{C}})
:=
\dfrac{-n(L_{\mathbb{C}}^{n-1}.
H)}{(n+1)(L_{\mathbb{C}})^n}
(\mathcal{L}_{\mathbb{C}})^{n+1}
+
r(\mathcal{L}_{\mathbb{C}}^{n}.
(p_1\circ p)^* H). 
$$

A polarized variety $(X_{\mathbb{C}},L_{\mathbb{C}})$ is 
$\mathrm{J}^H${\textit{-semistable}} 
if $\mathcal{J}^{H,{\rm NA}}
(\mathcal{X}_{\mathbb{C}},\mathcal{L}_{\mathbb{C}})
\ge 0$ for any test configuration.
$(X_{\mathbb{C}},L_{\mathbb{C}})$ is called {\it uniformly} $\mathrm{J}^H${\textit{-stable}} if there exists $\epsilon>0$ such that $(X_{\mathbb{C}},L_{\mathbb{C}})$ is $\mathrm{J}^{H-\epsilon L_{\mathbb{C}}}$-semistable.

\end{Def}

\begin{Rem}\label{rem--j-for-positive}

Note that the above Definition \ref{Jst.def} 
does not particularly use that the 
base field is $\mathbb{C}$. Hence, 
    we can also define J-stability of polarized varieties over any field, including positive characteristic, in the same way. 
\end{Rem}

The analogue of Yau-Tian-Donaldson conjecture 
for the J-stability is now a theorem, as conjectured by 
Lejmi-Szekelyhidi \cite{LS}. 

\begin{Thm}[\cite{Gao, DatarPingali, So}] 
Fix a K\"ahler form $\chi$ such that 
$[\chi]=c_{1}(H_{\mathbb{C}})$. Then, 
the following are equivalent:
\begin{enumerate}
\item There is a (unique) K\"ahler form 
$\omega$ such that $[\omega]=\mathrm{c}_1(L_{\mathbb{C}})$ which 
 satisfies the J-equation \eqref{Jeqnn} above. 
\item $(X_{\mathbb{C}},
L_{\mathbb{C}})$ 
is uniformly 
$J^H$-stable. 
\end{enumerate}
\end{Thm}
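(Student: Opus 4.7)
The plan is to treat the two implications separately by viewing the J-equation as the Euler--Lagrange equation of a convex energy functional on the space of K\"ahler potentials. First I would set up the J-energy $J^{H}\colon\mathcal{H}\to\mathbb{R}$ on the space $\mathcal{H}$ of K\"ahler potentials for $c_{1}(L_{\mathbb{C}})$, normalised so that its first variation is (up to a constant) $(\mathrm{tr}_{\omega_{\varphi}}\chi-c)\,\omega_{\varphi}^{n}$ and its critical points are exactly the solutions of \eqref{Jeqnn}. A standard intersection-theoretic computation identifies the non-archimedean slope of $J^{H}$ along the geodesic ray associated to a test configuration $(\mathcal{X}_{\mathbb{C}},\mathcal{L}_{\mathbb{C}})$ with $\mathcal{J}^{H,\mathrm{NA}}(\mathcal{X}_{\mathbb{C}},\mathcal{L}_{\mathbb{C}})$; this is the bridge between the complex analytic and algebraic sides.

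For the direction $(1)\Rightarrow(2)$, the existence of a smooth solution $\omega$ realises a critical point of $J^{H}$, which by Chen's geodesic convexity of $J^{H}$ along $C^{1,1}$-geodesics in $\mathcal{H}$ is automatically a global minimum. I would then upgrade this to coercivity $J^{H}(\varphi)\ge\varepsilon\,J(\varphi)-C$ for some $\varepsilon>0$, using non-degeneracy of the linearisation at the critical point together with a standard compactness argument modulo automorphisms. Passing to the non-archimedean slope along the ray of an arbitrary test configuration yields $\mathcal{J}^{H,\mathrm{NA}}\ge\varepsilon\,J^{\mathrm{NA}}$, which is exactly $J^{H-\varepsilon L_{\mathbb{C}}}$-semistability.

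For the direction $(2)\Rightarrow(1)$, I would run the continuity method on the family $\mathrm{tr}_{\omega_{s}}(\chi-s\,\omega_{h_{L}})=c_{s}$ for $s\in[0,\varepsilon)$, with $\varepsilon$ provided by uniform $J^{H}$-stability. Openness follows from the standard ellipticity of the linearisation. Higher order a priori bounds reduce to a uniform $C^{0}$ estimate by Evans--Krylov--Trudinger type arguments together with the usual bootstrapping. \emph{The main obstacle is obtaining the $C^{0}$-estimate using only uniform $J^{H}$-stability.} Following the strategy of \cite{Gao,DatarPingali,So}, I would argue by contradiction: assuming $\sup\varphi_{s}-\inf\varphi_{s}\to\infty$ along the continuity path, one normalises the potentials and extracts a limiting non-archimedean object (a geodesic ray, a destabilising filtration, or ultimately a test configuration) whose slope of $J^{H}$ is non-positive, contradicting the quantitative bound supplied by uniform $J^{H}$-stability. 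Once the $C^{0}$ bound propagates to $s=\varepsilon$, standard elliptic regularity yields a smooth solution of \eqref{Jeqnn}, and uniqueness follows from strict convexity of $J^{H}$ along geodesics in $\mathcal{H}$.
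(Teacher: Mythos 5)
The paper states this result as a theorem cited from \cite{Gao, DatarPingali, So} and does not reprove it, so there is no in-paper argument to compare against; your sketch must be judged against how those references actually establish the equivalence, and there it has a genuine gap in each direction.

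For $(2)\Rightarrow(1)$, the hard implication, the cited works do not extract a destabilising geodesic ray, filtration, or test configuration from a diverging continuity path. Turning $C^{0}$ blow-up into a bona fide test configuration with $\mathcal{J}^{H,\mathrm{NA}}\le 0$ is exactly the step your sketch invokes but does not supply, and it is not carried out in the J-equation literature. What \cite{Gao, DatarPingali, So} actually do is first translate uniform $J^{H}$-stability into a Nakai--Moishezon/Lejmi--Sz\'ekelyhidi positivity condition on intersection numbers over all subvarieties $V\subset X_{\mathbb{C}}$ (by evaluating $\mathcal{J}^{H,\mathrm{NA}}$ on deformations to the normal cone of $V$), and then prove existence from that numerical condition by a mass-concentration / localised a priori estimate argument near a putative subvariety where the inequality degenerates. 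For $(1)\Rightarrow(2)$, your passage from ``critical point is a global minimiser'' to coercivity via ``non-degeneracy of the linearisation plus compactness modulo automorphisms'' is imported from the cscK setting and does not apply here: the J-equation has no relevant automorphism group, and in any case a geodesically convex functional can have a unique smooth critical point without being proper. The mechanism in the references is again the cone condition: a smooth solution forces the strict Lejmi--Sz\'ekelyhidi inequality, which in turn yields a uniform positive lower bound on $\mathcal{J}^{H,\mathrm{NA}}$ over test configurations, i.e.\ $J^{H-\epsilon L_{\mathbb{C}}}$-semistability for some $\epsilon>0$.
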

Here, uniform J-stability above is a slight strengthening of the J-stability i.e., it implies J-stability, after the idea of 
\cite{BHJ, Dervan}. 

Next, we recall the definition of filtrations.

\begin{Def}\label{def--j-filt}
    Let $X$ be a proper reduced scheme over a field $\mathbbm{k}$ with an ample line bundle $L$.
    Suppose that $H^0(X,L^{\otimes m})$ is generated by $H^0(X,L)$ for any $m\in \mathbb{Z}_{>0}$. 
    We call a set of subspaces $\mathscr{F}:=\{\mathscr{F}^{\lambda} H^0(X,L^{\otimes m})\}_{m\in\mathbb{Z}_{>0},\lambda\in\mathbb{Z}}$ of $H^0(X,L^{\otimes m})$ a {\it filtration} if $\mathscr{F}$ satisfies the following:
\begin{enumerate}  
\item $\mathscr{F}^{\lambda} H^0(X,L^{\otimes m})\cdot \mathscr{F}^{\lambda'} H^0(X,L^{\otimes m'})\subset\mathscr{F}^{\lambda+\lambda'} H^0(X,L^{\otimes m+m'})$,
\item $\mathscr{F}^{\lambda} H^0(X,L^{\otimes m})\subset\mathscr{F}^{\lambda'} H^0(X,L^{\otimes m})$ for any $\lambda>\lambda'$,
\item there exists $N>0$ such that if $\lambda>Nm$ then $\mathscr{F}^{\lambda} H^0(X,L^{\otimes m})=0$ and if $\lambda\le Nm$ then $\mathscr{F}^{\lambda} H^0(X,L^{\otimes m})=H^0(X,L^{\otimes m})$.
 \end{enumerate}
 For any $\mathscr{F}$, we fix $N$ as above.
 Then we take an approximation to $\mathscr{F}$ as follows.
 For any $l\in\mathbb{Z}_{>0}$, let $\mathscr{F}_{(l)}$ be a filtration generated by $\mathscr{F}^{t}H^0(X,L^{\otimes m})$ and $\mathscr{F}^{\lambda}H^0(X,L^{\otimes l})$ for any $t\le -Nm$, $\lambda$ and $m$ as \cite[Definition 2.16]{H22}.
 We call a sequence $\{\mathscr{F}_{(l)}\}_{l\in\mathbb{Z}_{>0}}$ an {\it approximation} to $\mathscr{F}$.
 We take the normal test configuration $(\mathcal{X}^{(l)},\mathcal{L}^{(l)})$ for $(X,L)$ 
 induced by $\mathscr{F}_{(l)}$ as \cite[Definition 2.19]{H22}, which is defined as follows.
 Let $\mathfrak{a}_{(l)}$ be the image of the following $\mathcal{O}_X[t]$-homomorphism
 \begin{equation}
 \bigoplus_{\lambda\in\mathbb{Z}} t^{-\lambda}\mathscr{F}^\lambda H^0(X,L^{\otimes l})\otimes\mathcal{O}_{X\times\mathbb{A}^1}(-l(L\times\mathbb{A}^1))\to \mathcal{O}_{X}[t,t^{-1}],\nonumber 
 \end{equation}
 where $t$ is the canonical coordinate of $\mathbb{A}^1$.
 Then, $\mu_l\colon \mathcal{X}^{(l)}\to X\times \mathbb{P}^1$ be the blow up along $\mathfrak{a}_{(l)}$ and $$\mathcal{L}^{(l)}:=\mu_l^*(L\times\mathbb{P}^1)-\frac{1}{l}\mu_l^{-1}(\mathfrak{a}_{(l)}).$$
 Let $H$ be an ample divisor on $X$ and take $D\in |mH|$ for any $m\in\mathbb{Z}_{>0}$.
 We say that $D$ is {\it compatible} with $\{\mathscr{F}_{(l)}\}$ if the support of $\mu_l^*D\times\mathbb{P}^1$ contains no $\mu_l$-exceptional divisor for any $l$.
 Finally, we close this subsection with the following lemma.
 \begin{Lem}[{\cite[Lemma 2.20]{H22}}]\label{lem-j-filt}
 In the above situation, we have that
 \[
     \mathcal{J}^{H,\mathrm{NA}}(\mathscr{F}):=\lim_{l\to\infty}\mathcal{J}^{H,\mathrm{NA}}(\mathcal{X}^{(l)},\mathcal{L}^{(l)}).
     \]
     If $(X,L)$ is further $\mathrm{J}^H$-semistable, then 
     \[
    \mathcal{J}^{H,\mathrm{NA}}(\mathscr{F})\ge0.
     \]
 \end{Lem}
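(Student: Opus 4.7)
The plan is to express $\mathcal{J}^{H,\mathrm{NA}}(\mathcal{X}^{(l)},\mathcal{L}^{(l)})$ intrinsically in terms of data of the filtration $\mathscr{F}_{(l)}$, and then reduce the existence of the limit to weak convergence of Duistermaat--Heckman-type measures associated to the approximating filtrations. Granting the formula for $\mathcal{J}^{H,\mathrm{NA}}$ of a test configuration and the description of $(\mathcal{X}^{(l)},\mathcal{L}^{(l)})$ as the blowup of $X\times\mathbb{P}^1$ along $\mathfrak{a}_{(l)}$, the intersection $(\mathcal{L}^{(l)})^{n+1}$ is computable from the graded dimensions $\dim \mathscr{F}_{(l)}^{\lambda}H^0(X,L^{\otimes m})$ via a standard expansion, and picking $D\in |mH|$ compatible with $\{\mathscr{F}_{(l)}\}$ lets the cross term $(\mathcal{L}^{(l)})^n\cdot (p_1\circ p)^*H$ be computed likewise after restriction to the strict transform of $D\times\mathbb{P}^1$, because no exceptional component of $\mu_l$ contributes. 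In particular, both quantities can be rewritten as integrals of the affine function $\lambda$ (respectively $\lambda^n$) against the measures $\mathrm{DH}(\mathscr{F}_{(l)})$ and $\mathrm{DH}(\mathscr{F}_{(l)}|_D)$.

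The next step is to take $l\to\infty$. By construction, for each fixed $m$ and $\lambda$ one has $\mathscr{F}_{(l)}^{\lambda}H^0(X,L^{\otimes m})\subset \mathscr{F}^{\lambda}H^0(X,L^{\otimes m})$ with equality once $l\ge m$, so the graded dimensions stabilize pointwise. Combined with the uniform support bound $|\lambda|/m\le N$ coming from condition (iii) of Definition \ref{def--j-filt}, this upgrades to convergence of all polynomial moments of the Duistermaat--Heckman measures, giving
\[
\lim_{l\to\infty}\mathcal{J}^{H,\mathrm{NA}}(\mathcal{X}^{(l)},\mathcal{L}^{(l)})=\int \lambda\,\mathrm{DH}(\mathscr{F})\;-\;(\text{analogous term for }D),
\]
which is exactly the invariant $\mathcal{J}^{H,\mathrm{NA}}(\mathscr{F})$ one would assign directly to $\mathscr{F}$, so the limit exists.

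For the second assertion, each $(\mathcal{X}^{(l)},\mathcal{L}^{(l)})$ (or its normalization, which can only decrease $\mathcal{J}^{H,\mathrm{NA}}$) is a genuine test configuration, so $\mathrm{J}^H$-semistability of $(X,L)$ yields $\mathcal{J}^{H,\mathrm{NA}}(\mathcal{X}^{(l)},\mathcal{L}^{(l)})\ge 0$ for every $l$, and passing to the limit gives $\mathcal{J}^{H,\mathrm{NA}}(\mathscr{F})\ge 0$. The main obstacle is not the non-negativity but the convergence step: one has to justify that the higher-order intersection numbers (not just the first moments) behave continuously as $l\to\infty$. This is the role of the compatible divisor $D$: it produces a ``transverse'' presentation of the cross term that avoids all exceptional contributions simultaneously, so the limit of intersection numbers can be read off from the limit of filtration dimensions without additional error terms.
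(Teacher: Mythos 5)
Your outline follows the right template — the paper's own proof is essentially a citation of \cite[Lemma 2.20]{H22}, and you have correctly identified the two structural ingredients of that argument: rewriting $\mathcal{J}^{H,\mathrm{NA}}(\mathcal{X}^{(l)},\mathcal{L}^{(l)})$ in terms of graded dimensions (or Duistermaat--Heckman measures) of $\mathscr{F}_{(l)}$ and $\mathscr{F}_{(l)}|_D$ for a compatible divisor $D$, and then letting $\mathrm{J}^H$-semistability propagate through the limit. However, there are two genuine gaps in the convergence step.

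First, the claimed pointwise stabilization is false. By Definition \ref{def--j-filt}, $\mathscr{F}_{(l)}$ is generated by $\mathscr{F}^{\lambda}H^0(X,L^{\otimes l})$ (the filtration in degree $l$ only) together with the trivial generators $\mathscr{F}^{t}H^0(X,L^{\otimes m})=H^0(X,L^{\otimes m})$ for $t\le -Nm$. Consequently, for a fixed degree $m<l$, the induced filtration $\mathscr{F}_{(l)}^{\lambda}H^0(X,L^{\otimes m})$ is the \emph{trivial} one with a single jump at $\lambda=-Nm$; it is not equal to $\mathscr{F}^{\lambda}H^0(X,L^{\otimes m})$, and does not become so once $l\ge m$. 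So the graded dimensions do not stabilize degree by degree — if anything the degree-$m$ data degenerates as $l\to\infty$. The convergence of the normalized DH measures of $\mathscr{F}_{(l)}$ to that of $\mathscr{F}$ is a genuinely nontrivial exchange of the $m\to\infty$ and $l\to\infty$ limits (a Boucksom--Chen/Sz\'ekelyhidi-type statement), and your argument via pointwise moment convergence does not establish it.

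Second, you pick $D\in|mH|$ ``compatible with $\{\mathscr{F}_{(l)}\}$'' as though such a $D$ always exists, but compatibility with the full approximating sequence is a countable intersection of open dense conditions, and Bertini over a countable base field does not guarantee a rational point in such an intersection. This is precisely why the paper's proof splits into two cases: when $\mathbbm{k}$ is uncountable one can find a compatible $D$, and otherwise one base-changes to an uncountable field (citing \cite[Remark 2.21]{H22}) and uses invariance of the relevant intersection numbers under field extension. Your write-up omits this reduction entirely. The non-negativity part of your argument is fine once the convergence is in place.
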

 \begin{proof}
 If $\mathbbm{k}$ is uncountable, then we can choose a compatible divisor $D\in |mH|$ for some $m$ with $\{\mathscr{F}_{(l)}\}$.
     Thus, $\lim_{l\to\infty}\mathcal{J}^{H,\mathrm{NA}}(\mathcal{X}^{(l)},\mathcal{L}^{(l)})$ exists and coincides with the value \cite[(5)]{H22} by {\cite[Lemma 2.20]{H22}} (whose proof also works for the positive characteristic case).  
     For the general case, we reduce to the previous case by changing the base field $\mathbbm{k}$ to some uncountable field (cf.~\cite[Remark 2.21]{H22}).
 \end{proof}
\end{Def}

\subsubsection{Delta invariant $($\cite{FO, BJ}$)$}
\label{delta.sec}

First, we recall the log canonical threshold.
\begin{Def}
    Let $X_{\mathbb{C}}$ be a normal variety over $\mathbb{C}$ such that $K_{X_{\mathbb{C}}}$ is $\mathbb{Q}$-Cartier with an effective $\mathbb{Q}$-Cartier $\mathbb{Q}$-Weil divisor $D$ on $X_{\mathbb{C}}$. 
For any prime divsior $E$ over $X_{\mathbb{C}}$, we set the log discrepancy 
\[
A_{X_{\mathbb{C}}}(E)=1+\mathrm{ord}_E(K_Y-\pi^*K_X),
\]
where $\pi\colon Y\to X$ is a log resolution such that $E$ is a divisor on $Y$.
Then, we set the {\it log canonical threshold} of $X_{\mathbb{C}}$ with respect to $D$ as
\[
\inf_{E}\frac{A_{X_{\mathbb{C}}}(E)}{\mathrm{ord}_E(D)},
\]
where $E$ runs over all prime divisors over $X_{\mathbb{C}}$.
\end{Def}

The {\textit{delta invariant}} $\delta(X_{\mathbb{C}},L_{\mathbb{C}})$
(a.k.a. 
{\textit{stability thresholds}})
introduced by \cite{FO, BJ} 
is a real invariant of a Fano variety or a 
general polarized variety $(X_{\mathbb{C}},L_{\mathbb{C}})$ 
(see Notation \ref{not.C}) 
which is now known to give an 
effective criterion 
for the K-stability. 
For its definition, 
a special type of $\mathbb{Q}$-divisors 
as follows 
is introduced (\cite{FO}). 

\begin{Def}[{\cite{FO}}]
Let $k$ be a natural number.  
For any basis 
\[
s_{1},\dots,s_{h^{0}(L_{\mathbb{C}}^{\otimes k})}\] 
of $H^{0}(L_{\mathbb{C}}^{\otimes k})$, taking the corresponding 
divisors $D_{1},\dots,D_{h^{0}(L_{\mathbb{C}}^{\otimes k})}$ (note $D_{i}\sim L_{\mathbb{C}}^{\otimes k}$), 
we obtain a $\mathbb{Q}$-divisor 
\[
D:=\frac{D_{1}+\cdots+D_{h^{0}(L_{\mathbb{C}}^{\otimes k})}}{k\cdot h^{0}(L_{\mathbb{C}}^{\otimes k})}.
\] 
This kind of effective 
$\mathbb{Q}$-divisor is called an 
\textit{$(\mathbb{Q}$-$)$divisor of $k$-basis type}. 
\end{Def}

\begin{Def}[\cite{FO}]
For $k\in\mathbb{Z}_{>0}$, we define 
\[
\delta_{k}(X_{\mathbb{C}},L_{\mathbb{C}}):=\inf_{
\substack{
(L_{\mathbb{C}}\sim_\mathbb{Q}) D;\\
D:\,\,k\text{-basis type}
}} \lct(X_{\mathbb{C}};D), 
\]
where lct stands for 
log canonical thresholds.
It is easy to see that there exist a prime divisor $E$ over $X_{\mathbb{C}}$ and a divisor $D$ of $k$-basis type such that
\[
\delta_{k}(X_{\mathbb{C}},L_{\mathbb{C}})=\frac{A_{X_{\mathbb{C}}}(E)}{\mathrm{ord}_E(D)}.
\]
Then, we say that $E$ {\it computes} $\delta_{k}(X_{\mathbb{C}},L_{\mathbb{C}})$.
On the other hand, we set 
\[
\delta(X_{\mathbb{C}},L_{\mathbb{C}}):=\lim_{k\to \infty}\delta_{k}(X_{\mathbb{C}},L_{\mathbb{C}}). 
\]
The above limit is known to exists 
by \cite{BJ}. 
Its original motivation is the following criterion. 
\end{Def}
\begin{Thm}[\cite{FO, BJ}]
For any Fano manifold 
$X_{\mathbb{C}}$,
$\delta(X_{\mathbb{C}},-K_{X_{\mathbb{C}}})> 1$ 
(resp., $\ge 1)$ 
then $(X_{\mathbb{C}},-K_{X_{\mathbb{C}}})$ is 
uniformly K-stable (resp., K-semistable). 
\end{Thm}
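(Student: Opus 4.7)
The plan is to rewrite the delta invariant purely in divisorial-valuation terms, namely
\[
\delta(X_{\mathbb{C}},L_{\mathbb{C}}) = \inf_E \frac{A_{X_{\mathbb{C}}}(E)}{S_{L_{\mathbb{C}}}(E)}, \qquad S_{L_{\mathbb{C}}}(E) := \frac{1}{(L_{\mathbb{C}})^n}\int_0^{\infty}\mathrm{vol}(L_{\mathbb{C}}-xE)\,dx,
\]
where $E$ ranges over prime divisors over $X_{\mathbb{C}}$, and then to match $A_{X_{\mathbb{C}}}(E) - S_{L_{\mathbb{C}}}(E)$ (up to a positive multiple) with the Donaldson--Futaki invariant of the divisorial test configuration generated by $E$ in the Fano case $L_{\mathbb{C}} = -K_{X_{\mathbb{C}}}$.

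First I would verify the identification $\delta = \inf_E A(E)/S(E)$ by sandwiching. Given a prime divisor $E$ over $X_{\mathbb{C}}$, I choose a basis of $H^0(L_{\mathbb{C}}^{\otimes k})$ adapted to the decreasing filtration $H^0(L_{\mathbb{C}}^{\otimes k}) \supset H^0(L_{\mathbb{C}}^{\otimes k} \otimes \mathcal{O}(-E)) \supset \cdots$; the associated $k$-basis-type divisor $D$ satisfies
\[
\mathrm{ord}_E(D) = \frac{1}{k\,h^0(L_{\mathbb{C}}^{\otimes k})}\sum_{m\ge 1} h^0\bigl(L_{\mathbb{C}}^{\otimes k}\otimes \mathcal{O}(-mE)\bigr),
\]
whose $k\to\infty$ limit is $S_{L_{\mathbb{C}}}(E)$ by the standard volume asymptotic. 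This yields $\delta_k \le A_{X_{\mathbb{C}}}(E)/\mathrm{ord}_E(D)$, hence $\delta \le A_{X_{\mathbb{C}}}(E)/S_{L_{\mathbb{C}}}(E)$. The reverse inequality is immediate: any $k$-basis-type $D$ has $\lct(X_{\mathbb{C}};D) \ge A_{X_{\mathbb{C}}}(E)/\mathrm{ord}_E(D)$ for the computing $E$, so $\lct(X_{\mathbb{C}};D) \ge \inf_{E'} A_{X_{\mathbb{C}}}(E')/S_k(E')$; passing to the limit gives $\delta \ge \inf_E A_{X_{\mathbb{C}}}(E)/S_{L_{\mathbb{C}}}(E)$.

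Second, I would identify $A-S$ with the Donaldson--Futaki invariant. For a dreamy prime divisor $E$, the divisorial test configuration $(\mathcal{X}_E,\mathcal{L}_E)$ obtained by degenerating along $E$ has, by a direct intersection computation on the blow-up combined with the Fano identity $L_{\mathbb{C}}\sim_{\mathbb{Q}}-K_{X_{\mathbb{C}}}$,
\[
\mathrm{DF}(\mathcal{X}_E,\mathcal{L}_E) = c(E)\bigl(A_{X_{\mathbb{C}}}(E) - S_{L_{\mathbb{C}}}(E)\bigr)
\]
with $c(E)>0$. For a general normal test configuration I would use the filtration formalism of Definition \ref{def--j-filt} and Lemma \ref{lem-j-filt} to present $\mathrm{DF}$ as a limit of divisorially generated contributions. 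The pointwise bound $A(E)\ge S(E)$ (i.e., $\delta\ge 1$) then forces $\mathrm{DF}\ge 0$, giving K-semistability; the strict bound $A(E)\ge(1+\epsilon)S(E)$ for some $\epsilon>0$ (which follows from $\delta>1$) yields uniform K-stability after incorporating a suitable norm on test configurations.

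The main obstacle lies in the passage from divisorial test configurations to arbitrary normal ones. A filtration-by-filtration approximation works for the Donaldson--Futaki functional (this is the Fujita--Li philosophy reflected in Lemma \ref{lem-j-filt}), but one must (a) verify that the infimum of the normalized Donaldson--Futaki invariant over all test configurations agrees with its infimum over divisorial ones up to arbitrarily small error, and (b) control the uniform-stability norm so that the strict inequality $\delta>1$ genuinely upgrades to a strict uniform lower bound. Once these approximation steps are in place, the deduction from $\delta\ge 1$ (resp.\ $>1$) to K-semistability (resp.\ uniform K-stability) is immediate.
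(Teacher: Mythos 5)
The paper does not prove this theorem; it is stated as a cited result from Fujita--Odaka and Blum--Jonsson, so there is no in-paper proof to match. Evaluating your sketch on its own terms, the overall scaffolding is right (Blum--Jonsson's expression $\delta = \inf_E A_{X_{\mathbb{C}}}(E)/S_{L_{\mathbb{C}}}(E)$; Fujita's computation showing $\mathrm{DF}$ of the divisorial test configuration generated by a dreamy $E$ is a positive multiple of $A_{X_{\mathbb{C}}}(E) - S_{L_{\mathbb{C}}}(E)$ in the anticanonical case), but there is a genuine gap exactly where you flag it, and your proposed way of closing it is not the one that works.

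The passage from divisorial test configurations to arbitrary normal ones is not an approximation statement. It is false that the infimum of (normalized) $\mathrm{DF}$ over all test configurations equals, even up to small error, the infimum over divisorial ones as limits of filtration approximants; a single non-divisorial test configuration can have $\mathrm{DF}$ strictly below every divisorial one if one only keeps track of $\mathrm{DF}$ and not a norm. What actually closes the gap is Fujita's valuative criterion (equivalently Li's derivative/normalized-volume approach and Li--Xu's MMP), which assigns to a given normal test configuration $(\mathcal{X}_{\mathbb{C}},\mathcal{L}_{\mathbb{C}})$ a concrete prime divisor $E$ over $X_{\mathbb{C}}$ (extracted from the irreducible components of the central fiber, after running an MMP to make the central fiber a klt pair or by passing to the ``Li derivative'' $v_0$) together with a quantitative inequality of the shape $\mathrm{DF}(\mathcal{X}_{\mathbb{C}},\mathcal{L}_{\mathbb{C}}) \ge c\,\bigl(A_{X_{\mathbb{C}}}(E) - S_{L_{\mathbb{C}}}(E)\bigr)$ with $c>0$ controlled by a norm of the test configuration. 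This is an MMP/log-canonical-threshold argument that has no analogue in the $\mathrm{J}$-stability setting of Lemma \ref{lem-j-filt}, which is why invoking that lemma as a template is misleading here: $\mathrm{J}^H$-stability is a purely intersection-theoretic functional and does not see discrepancies, while the reduction from test configurations to divisorial valuations for the Mabuchi/Donaldson--Futaki functional crucially uses adjunction and discrepancy bounds. Until that reduction is supplied (and the uniform version requires, in addition, comparing the minimum norm or $\mathcal{J}^{\mathrm{NA}}$ of the test configuration with $S_{L_{\mathbb{C}}}(E)$), the deduction from $\delta>1$ to uniform K-stability is not established. Also, a smaller point: your ``reverse inequality'' in the first step is circular as written, since you invoke a divisor $E$ ``computing'' $\delta_k$ without justifying that the infimum over $E'$ of $A_{X_{\mathbb{C}}}(E')/S_k(E')$ bounds $\lct$ of an arbitrary basis-type divisor; the actual argument in Blum--Jonsson requires Okounkov-body or filtration comparisons, not just the sandwich you describe.
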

Here, again, the uniform K-stability above is a priori strengthening of the K-stability due to \cite{BHJ, Dervan} but 
more recently 
they are 
confirmed to be equivalent again for anticanonically polarized 
$\mathbb{Q}$-Fano varieties (\cite{LXZ}). 
\begin{Rem}
    We note that $\delta_k(X_K,L_K)$ for any $k\in\mathbb{Z}_{>0}$ and $\delta(X_K,L_K)$ for any polarized klt pair over any field in the same way as the complex field. See also \cite[Definition 2.3]{Zhu}.\label{rem-delta}
\end{Rem}

\subsubsection{Special K-stability $($\cite{H22a, H22}$)$}
\label{spKst.sec}

Recently the delta invariant turned 
out to be also efficient for 
studying K-stability of more general 
varieties (cf., e.g., 
\cite{Zhang, H22a, H22}). 
In particular, \cite{H22}  
introduces the following notion which also forms a 
key idea of the current paper. 

\begin{Def}[{\cite[Definition 3.10]{H22}}]\label{specialKss}
We call a polarized complex variety $(X_{\mathbb{C}},L_{\mathbb{C}})$ 
is {\it specially K-stable} (resp.~{\it specially K-semistable}) 
if $X_{\mathbb{C}}$ is semi-log-canonical and 
both of the following hold: 
\begin{enumerate}
\item $K_{X_{\mathbb{C}}}+\delta(X_{\mathbb{C}},L_{\mathbb{C}})
L_{\mathbb{C}}$ is ample (resp.~nef), 
\item $(X_{\mathbb{C}},L_{\mathbb{C}})$ is uniformly $\mathrm{J}^{K_{X_{\mathbb{C}}}+\delta(X_{\mathbb{C}},L_{\mathbb{C}})
L_{\mathbb{C}}}$-stable 
(resp.~$\mathrm{J}^{K_{X_{\mathbb{C}}}+\delta(X_{\mathbb{C}},L_{\mathbb{C}})
L_{\mathbb{C}}}$-semistable). 
\end{enumerate}
\end{Def}

The main point of this notion is: 

\begin{Thm}[{\cite[Corollary 3.21]{H22}}]\label{spKss.Kss}
If $(X_{\mathbb{C}},L_{\mathbb{C}})$ is specially K-stable (resp.~specially K-semistable), 
it is also uniformly K-stable (resp.~specially K-semistable). 
\end{Thm}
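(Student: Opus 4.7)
The plan is to expand the Donaldson-Futaki invariant of any normal test configuration as the sum of $\mathcal{J}^{K_{X_{\mathbb{C}}}+\delta L_{\mathbb{C}}, \mathrm{NA}}$ and an ``entropy minus $\delta\cdot\mathcal{J}^{L_{\mathbb{C}}, \mathrm{NA}}$'' correction, to show that this correction is non-negative using the $\delta$-invariant, and then to invoke uniform $\mathrm{J}^{K_{X_{\mathbb{C}}}+\delta L_{\mathbb{C}}}$-stability to obtain a strictly positive lower bound on $\mathrm{DF}$ proportional to a non-archimedean norm.

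Concretely, fix a normal test configuration $(\mathcal{X}_{\mathbb{C}}, \mathcal{L}_{\mathbb{C}})$ of exponent $r$, put $\delta := \delta(X_{\mathbb{C}}, L_{\mathbb{C}})$, and resolve the birational map $X_{\mathbb{C}} \times \mathbb{P}^1 \dashrightarrow \mathcal{X}_{\mathbb{C}}$ by $p: \mathcal{Y} \to X_{\mathbb{C}} \times \mathbb{P}^1$, $q: \mathcal{Y} \to \mathcal{X}_{\mathbb{C}}$. Substituting into Definitions \ref{DF.def} and \ref{Jst.def} and using $(L_{\mathbb{C}}^{n-1} \cdot L_{\mathbb{C}}) = L_{\mathbb{C}}^n$ yields the identity
\begin{equation*}
\mathrm{DF}(\mathcal{X}_{\mathbb{C}}, \mathcal{L}_{\mathbb{C}}) = \mathcal{J}^{K_{X_{\mathbb{C}}}+\delta L_{\mathbb{C}}, \mathrm{NA}}(\mathcal{X}_{\mathbb{C}}, \mathcal{L}_{\mathbb{C}}) + r \cdot \mathrm{Ent}(\mathcal{X}_{\mathbb{C}}, \mathcal{L}_{\mathbb{C}}) - \delta \cdot \mathcal{J}^{L_{\mathbb{C}}, \mathrm{NA}}(\mathcal{X}_{\mathbb{C}}, \mathcal{L}_{\mathbb{C}}),
\end{equation*}
where $\mathrm{Ent}(\mathcal{X}_{\mathbb{C}}, \mathcal{L}_{\mathbb{C}}) := (q^*\mathcal{L}_{\mathbb{C}}^n \cdot (K_{\mathcal{X}_{\mathbb{C}}/\mathbb{P}^1} - (p_1 \circ p)^* K_{X_{\mathbb{C}}}))$ encodes the log-discrepancy contributions of the irreducible components of the central fibre. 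The theorem then reduces to the inequality $r \cdot \mathrm{Ent}(\mathcal{X}_{\mathbb{C}}, \mathcal{L}_{\mathbb{C}}) \ge \delta \cdot \mathcal{J}^{L_{\mathbb{C}}, \mathrm{NA}}(\mathcal{X}_{\mathbb{C}}, \mathcal{L}_{\mathbb{C}})$: combined with Definition \ref{specialKss}(ii), which yields $\epsilon>0$ with $\mathcal{J}^{K_{X_{\mathbb{C}}}+(\delta-\epsilon) L_{\mathbb{C}}, \mathrm{NA}}\ge 0$, i.e.\ $\mathcal{J}^{K_{X_{\mathbb{C}}}+\delta L_{\mathbb{C}}, \mathrm{NA}} \ge \epsilon \cdot \mathcal{J}^{L_{\mathbb{C}}, \mathrm{NA}}$, and the standard non-negativity of $\mathcal{J}^{L_{\mathbb{C}}, \mathrm{NA}}$ on non-trivial configurations, this gives $\mathrm{DF}(\mathcal{X}_{\mathbb{C}}, \mathcal{L}_{\mathbb{C}}) \ge \epsilon \cdot \mathcal{J}^{L_{\mathbb{C}}, \mathrm{NA}}(\mathcal{X}_{\mathbb{C}}, \mathcal{L}_{\mathbb{C}})$, namely uniform K-stability (semistability for $\epsilon=0$).

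The entropy bound itself is handled via the filtration formalism of Definition \ref{def--j-filt}. Associate to $(\mathcal{X}_{\mathbb{C}}, \mathcal{L}_{\mathbb{C}})$ its filtration $\mathscr{F}$ on $\bigoplus_m H^0(X_{\mathbb{C}}, L_{\mathbb{C}}^{\otimes m})$ and approximate by the basis-type truncations $\mathscr{F}_{(l)}$, whose associated test configurations $(\mathcal{X}^{(l)}_{\mathbb{C}}, \mathcal{L}^{(l)}_{\mathbb{C}})$ arise by blowing up ideals generated by $l$-basis-type sections. On each $\mathscr{F}_{(l)}$, re-express both $r\cdot\mathrm{Ent}$ and $\delta_l\cdot \mathcal{J}^{L_{\mathbb{C}}, \mathrm{NA}}$ as sums over central-fibre components $E_i$ weighted by a common non-negative coefficient $(\mathcal{L}^{(l)}_{\mathbb{C}})^n \cdot E_i$ multiplied respectively by $A_{X_{\mathbb{C}}}(\mathrm{ord}_{E_i})$ and by $\delta_l \cdot \mathrm{ord}_{E_i}(D)$, for a compatible $l$-basis-type divisor $D$. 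The defining property of $\delta_l$ gives $A_{X_{\mathbb{C}}}(E) \ge \delta_l \cdot \mathrm{ord}_E(D)$ term by term, and Lemma \ref{lem-j-filt} (together with its entropy counterpart in the same spirit) permits passage to $l\to \infty$ using $\delta_l \to \delta$, completing the bound.

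The main obstacle is precisely this limiting step. The $\delta$-invariant is an infimum over all basis-type divisors, which bear no a priori relationship to the valuations coming from a given test configuration; the filtration approximation in Definition \ref{def--j-filt} is the mechanism that bridges these two perspectives, but one must verify convergence of the entropy functional on $\mathscr{F}_{(l)}$ to that on $\mathscr{F}$ with enough uniformity to survive the joint passage $\delta_l \to \delta$. Semi-log-canonicity of $X_{\mathbb{C}}$ is used throughout to guarantee non-negativity of log discrepancies and hence control the sign of $\mathrm{Ent}$; ampleness of $K_{X_{\mathbb{C}}}+\delta L_{\mathbb{C}}$ in Definition \ref{specialKss}(i) ensures that uniform $\mathrm{J}^{K_{X_{\mathbb{C}}}+\delta L_{\mathbb{C}}}$-stability is meaningfully definable and stable under the perturbation by $-\epsilon L_{\mathbb{C}}$.
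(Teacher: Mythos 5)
Note first that the paper does not contain a proof of this statement at all: it is imported verbatim as \cite[Corollary 3.21]{H22}. There is therefore no ``paper's own proof'' to compare against line by line, and I can only assess your argument against the structure of the cited work and the filtration machinery the paper does set up (Definition \ref{def--j-filt}, Lemma \ref{lem-j-filt}).

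Your overall decomposition is correct as an algebraic identity: since $\mathcal{J}^{H,\mathrm{NA}}$ is linear in $H$, one has
$\mathcal{J}^{K_{X_{\mathbb{C}}}+\delta L_{\mathbb{C}},\mathrm{NA}}=\mathcal{J}^{K_{X_{\mathbb{C}}},\mathrm{NA}}+\delta\,\mathcal{J}^{L_{\mathbb{C}},\mathrm{NA}}$, and $\mathrm{DF}-\mathcal{J}^{K_{X_{\mathbb{C}}},\mathrm{NA}}$ is exactly your $r\cdot\mathrm{Ent}$. The reduction to ``entropy bound plus uniform J-stability'' is also the right skeleton and is consistent with the route in \cite{H22} and in K.~Zhang's work. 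Where the proposal has a genuine gap is in the entropy bound itself, which is the entire technical content of the theorem. You assert that on the approximating test configuration $(\mathcal{X}^{(l)},\mathcal{L}^{(l)})$ both $r\cdot\mathrm{Ent}$ and $\mathcal{J}^{L_{\mathbb{C}},\mathrm{NA}}$ can be written as sums $\sum_i \bigl((\mathcal{L}^{(l)})^n\cdot E_i\bigr)\cdot(\ast)$ with the \emph{same} nonnegative weights $(\mathcal{L}^{(l)})^n\cdot E_i$, so that the inequality $A_{X_{\mathbb{C}}}(E_i)\ge\delta_l\,\mathrm{ord}_{E_i}(D)$ can be applied term by term. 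This is not something that follows from the definitions. The functional $\mathcal{J}^{L_{\mathbb{C}},\mathrm{NA}}$ is quadratic in $\mathcal{L}$: it involves both $\mathcal{L}^{n+1}$ and $\mathcal{L}^n\cdot(p_1\circ p)^*L$, and unwinding it as a weighted sum over central-fibre components is a nontrivial computation (in \cite{H22} this is the content of its Theorem 3.5, which the present paper cites and adapts in Claim \ref{claim-2}). Similarly, $\mathrm{Ent}$ involves $q^*K_{\mathcal{X}/\mathbb{P}^1}-(p_1\circ p)^*K_{X_{\mathbb{C}}}$, whose coefficients are log discrepancies of valuations on $X_{\mathbb{C}}\times\mathbb{P}^1$ and are related to $A_{X_{\mathbb{C}}}(\mathrm{ord}_{E_i})$ only up to an additive shift by the multiplicity $b_{E_i}$ of $E_i$ in the central fibre; you need the BHJ-type identity matching these, and it is far from automatic that the residual terms cancel against the $\mathcal{L}^{n+1}$ part of $\mathcal{J}^{L_{\mathbb{C}},\mathrm{NA}}$. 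Until both sides are written in the same basis with matching coefficients, ``term by term'' does not make sense.

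Two further points you have not addressed. First, you invoke an ``entropy counterpart'' of Lemma \ref{lem-j-filt}, but nothing of the sort is established in the paper: Lemma \ref{lem-j-filt} controls the limit of $\mathcal{J}^{H,\mathrm{NA}}(\mathcal{X}^{(l)},\mathcal{L}^{(l)})$, and continuity of the entropy under filtration approximation is a separate statement requiring control of the log discrepancies of the exceptional divisors of the blow-ups $\mu_l$, not just of intersection numbers. In fact the very choice of compatible divisor $D\in|mH|$ in Definition \ref{def--j-filt} is geared to the J-functional; one must argue separately that a divisor compatible in this sense also computes $\mathrm{lct}$ against the relevant valuations. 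Second, Definition \ref{specialKss} allows $X_{\mathbb{C}}$ to be \emph{semi-log-canonical}, hence possibly non-normal. Your $\mathrm{Ent}$, written with $K_{\mathcal{X}_{\mathbb{C}}/\mathbb{P}^1}$ and log discrepancies $A_{X_{\mathbb{C}}}(E)$, implicitly assumes a normal, $\mathbb{Q}$-Gorenstein ambient space; in the slc case one has to pass to the normalization and carry a conductor boundary, and the delta invariant and entropy must be interpreted accordingly. These are exactly the places where \cite{H22} does real work, so while your proposal identifies the right ingredients, the step you label ``the main obstacle'' is in fact the whole theorem, and as written the proposal does not close it.
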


As \cite[Theorem 3.12]{H22} reviews (cf., also e.g., \cite{H21}), 
there are many classes of 
polarized varieties which satisfy special K-(semi)stability. 

\subsection{Positive characteristic analogue of 
$\delta$-invariant}\label{Frob.sec}

Now we turn to a preparation for the reductions at non-archimedean places, 
which is to introduce a positive characteristic analogue of the $\delta$-invariant (\cite{FO, BJ}). In the next section, we  use it to 
formulate a positive characterisitics analogue of 
special K-semistability (Definition \ref{specialKss}). 

Since the following arguments work more 
generally, i.e., not only for reductions of 
arithmetic models, we use the following 
(compatible) notation in this subsection 
\S \ref{not.poschar}: 

\begin{Notation}[Positive characteristic setup]\label{not.poschar}

$X_{\mathfrak{p}}$ 
is a projective scheme over a field of positive characteristic, and $L_{\mathfrak{p}}$ is an ample line bundle on it. 
Unlike Notation \ref{not.ar}, 
$(X_{\mathfrak{p}},L_{\mathfrak{p}})$ 
does not necessarily lift to 
$\mathcal{O}_{\overline{\mathbb{Q}}}$, 
the ring of integers in $\overline{\mathbb{Q}}$. 
\end{Notation}

\begin{Def}[Frobenius $\delta$-invariant]\label{Frob.delta}
For a triple $(X_{\mathfrak{p}},\Delta,L_{\mathfrak{p}})$ of geometrically normal projective variety $X_{\mathfrak{p}}$ over a field of characteristic $p>0$, 
its effective $\mathbb{Q}$-Weil divisor such that $K_{X_{\mathfrak{p}}}+\Delta$ is $\mathbb{Q}$-Cartier, 
$(X_{\mathfrak{p}},\Delta)$ being locally F-pure (\cite[Definition 2.1]{HW}) 
an ample line bundle $L_{\mathfrak{p}}$ over $X_{\mathfrak{p}}$, we consider the following invariants. 
\begin{enumerate}
\item 
For a positive integer $k$, 
we set {\it the $k$-(quantized) Frobenius $\delta$-invariant} 
$$\delta_{(X_{\mathfrak{p}},\Delta),k}^F(L_{\mathfrak{p}}):=\inf_{
\substack{
(L_{\mathfrak{p}} \sim_\mathbb{Q}) D;\\
D:\,\,k\text{-basis type}
}} {\rm Fpt}((X_{\mathfrak{p}},\Delta);D),
$$
where ${\rm Fpt}$ denotes the F-pure threshold 
$$\sup\{c\mid (X_{\mathfrak{p}},\Delta+cD) \text{ is locally F-pure } \}$$
as originally introduced in \cite[\S 2]{TW} for the affine setup. Here, $D$ runs over all $k$-basis type divisors for $L_{\mathfrak{p}}$ in the sense of \cite[Definition 0.1]{FO}, \cite[Introduction]{BJ}. 
\item 
Then we define {\it the Frobenius $\delta$-invariant} as 
$$\delta_{(X_{\mathfrak{p}},\Delta)}^F(L_{\mathfrak{p}}):=\liminf_{k\to \infty}\delta_{(X_{\mathfrak{p}},\Delta),k}^F(L_{\mathfrak{p}}).$$ 
\end{enumerate}
\end{Def}
Recall that if we replace ${\rm Fpt}$ by ${\rm lct}$, the above is nothing but 
$\delta_{(X_{\mathfrak{p}},\Delta)}(L_{\mathfrak{p}})$ in the original 
form \cite{FO} (see also \cite{BJ}). 
We sometimes omit $(X_{\mathfrak{p}},\Delta)$ from the subscripts in the above and simply write $\delta_k^F(L_{\mathfrak{p}})$ and $\delta^F(L_{\mathfrak{p}})$ respectively. 
On the other hand, we can define $\delta^F_{(\overline{X_{\mathfrak{p}}},\overline{\Delta},k)}(\overline{L_{\mathfrak{p}}})$ and $\delta^F_{(\overline{X_{\mathfrak{p}}},\overline{\Delta})}(\overline{L_{\mathfrak{p}}})$ in the same way, i.e.~
\begin{align*}
    \delta_{(\overline{X_{\mathfrak{p}}},\overline{\Delta}),k}^F(\overline{L_{\mathfrak{p}}})&:=\inf_{
\substack{
(\overline{L_{\mathfrak{p}}} \sim_\mathbb{Q}) D;\\
D:\,\,k\text{-basis type}
}} {\rm Fpt}((\overline{X_{\mathfrak{p}}},\overline{\Delta});D),\\
\delta_{(\overline{X_{\mathfrak{p}}},\overline{\Delta})}^F(\overline{L_{\mathfrak{p}}})&:=\liminf_{k\to \infty}\delta_{(\overline{X_{\mathfrak{p}}},\overline{\Delta}),k}^F(\overline{L_{\mathfrak{p}}}),
\end{align*} where $(\overline{X_{\mathfrak{p}}},\overline{\Delta},\overline{L_{\mathfrak{p}}})=(X_{\mathfrak{p}},\Delta,L_{\mathfrak{p}})\times_{\mathcal{O}_F/\mathfrak{p}}\overline{\mathcal{O}_F/\mathfrak{p}}$ and $\overline{\mathcal{O}_F/\mathfrak{p}}$ denotes the algebraic closure of $\mathcal{O}_F/\mathfrak{p}$.
We sometimes simply write them as $\delta_k^F(\overline{L_{\mathfrak{p}}})$ and $\delta^F(\overline{L_{\mathfrak{p}}})$ respectively. 
We note that 
\begin{align*}
    \delta_k^F(\overline{L_{\mathfrak{p}}})&\le \delta_k^F(L_{\mathfrak{p}})\\
    \delta^F(\overline{L_{\mathfrak{p}}})&\le \delta^F(L_{\mathfrak{p}}).
\end{align*}

We remark that there are some examples 
where the above inequality can be strict 
(indeed, similar arguments to Remark \ref{rem:deltacompare} 
applies to nontrivial twists of elliptic curves). 
Also note that by the simple combination of \cite[3.3]{HW}, \cite[2.2(5)]{TW} and \cite[A]{BJ}, we have 
$\delta_{(X_{\mathfrak{p}},\Delta)}(L)\ge \delta_{(X_{\mathfrak{p}},\Delta)}^F(L_{\mathfrak{p}}).$ Note also that the above definition naturally extends to $cL$ with a line bundle $L$ and $c\in \mathbb{R}_{>0}$ as $\delta_{(X_{\mathfrak{p}},\Delta),k}^F(cL_{\mathfrak{p}})=\frac{1}{c}\delta_{(X_{\mathfrak{p}},\Delta),k}^F(L_{\mathfrak{p}})$.

\subsection{Twisted analogue of 
\cite{OFal}}\label{twist.sec}
We now go back to the Arakelov geometric setup, and discuss 
after Notation \ref{not.ar} henceforth. 
The following are auxiliary ``twisted" analogues of the original Arakelov K-energy 
(Definition \ref{hK}) of \cite{OFal} and 
its variants. 
The ``twist" here refers to 
consideration of (again) an additional 
hermitian-metrized line bundle $(H,h)$ 
so that the original untwisted setup 
means the case when $H=\mathcal{O}_X$ 
and $h$ is trivial metric over any 
infinite place. See e.g., \cite{Dervan} for more 
background. 

We refrain from considering any boundary, 
i.e., ``logarithmic" extension with mild singularities, to avoid non-substantial technical complications. 
We use these to partially prove Conjecture \ref{aYTD}, resulting to our main Theorem \ref{hK.min}. 

\begin{Def}
\label{A-functionals}
Fix an ample-polarized normal projective model $(X,L)$ over $\mathcal{O}_F$ with $h_L$ as Notation \ref{not.ar}.
\begin{enumerate}
\item 
For a metrized line bundle $\overline{H}=(H,h)$ on $X$, we define 
{\it the $(H,h)$-twisted Arakelov-K-energy} as 
\begin{align*}
&h_{K,\bar{H}}^{\rm Ar}(X,L,h_L)\\
&:=\frac{1}{[F:\mathbb{Q}]}
\biggl\{-\frac{n(L_{\eta}^{n-1}.K_{X_\eta}\otimes H|_{X_{\eta}})}{(n+1)(L_{\eta}^{n})^2}(\overline{L}^{h_L})^{n+1}
+\dfrac{((\overline{L}^{h_L})^{n}.\overline{K_{\mathcal{X}/\mathcal{O}_F}}^{\it Ric(\omega_{h_L})}\otimes \overline{H}^h)}
{(L_{\eta}^{n})}
\biggr\}.
\end{align*}
In the above, the intersection numbers of the metrized 
line bundles on the total spaces 
are that of \cite{GS} and 
the superscript 
$\it Ric(\omega_{h_L})$ means the metrization of $K_{X(\mathbb{C})}$ which corresponds 
to the Monge-Ampere measure $\omega_{h_L}^n$. 
Note that if $(H,h)$ is trivial, the above quantity is nothing but Definition \ref{hK}. 
\cite[Definition 2.4]{OFal} modulo a normalizing constant 
$(n+1)(L_{\eta})^n$. 
\item 
For a line bundle $H$ on $X$, with a real type hermitian metric $h$ on $H|_{X_\eta}(\mathbb{C})$, 
we suppose $L_{\eta}=(K_{X_\eta}\otimes H|_{X_{\eta}})^{\vee}$. Then, we define 
{\it the $(H,h)$-twisted Arakelov-Ding functional} as 
$$\mathcal{D}_{\bar{H}}^{\rm Ar}(X,L,h_L):=\frac{1}{[F:\mathbb{Q}]}
\biggl\{
-\dfrac{(\overline{L})^{n+1}}{(n+1)(L_{\eta})^n}
+\widehat{\rm deg}H^0(K_{X/\mathcal{O}_F}\otimes \overline{H}^{h} \otimes \overline{L}^{h_L})
\biggr\},
$$
\noindent
where $H^0(K_{X/\mathcal{O}_F}\otimes \overline{H}^{h} \otimes \overline{L}^{h_L})$ 
is associated with the $L^2$-metric. 
\item For a metrized line bundle $\overline{H}=(H,h)$ on $X$, we set the {\it Arakelov-$\mathrm{J}^{\bar{H}}$-energy} as
\begin{align*}
\mathcal{J}^{{\rm Ar},\bar{H}}(X,L,h_L):=\frac{1}{[F:\mathbb{Q}]}
\biggl\{-\frac{n(L_{\eta}^{n-1}.H|_{X_{\eta}})}{(n+1)(L_{\eta}^{n})^2}(\overline{L}^{h_L})^{n+1}
+\dfrac{((\overline{L}^{h_L})^{n}.\overline{H}^h)}
{(L_{\eta}^{n})}
\biggr\}.
\end{align*}
We note that  $h_{K,\bar{H}}^{\rm Ar}(X,L,h_L)=h_{K}^{\rm Ar}(X,L,h_L)+\mathcal{J}^{{\rm Ar},\bar{H}}(X,L,h_L)$ holds.
\end{enumerate}
\end{Def}

\begin{Lem}
\label{2.12.analogue}
If $L_{\eta}=(K_{X_\eta}\otimes H|_{X_{\eta}})^{\vee}$, 
then $$h_{K,\bar{H}^h}^{\rm Ar}(X,L,h_L)\ge \mathcal{D}_{\bar{H}^h}^{\rm Ar}(X,L,h_L)-(L_{\eta}^n)\log(L_{\eta}^n),$$
so that 
\begin{align}\label{hKdecomp}
h_{K}^{\rm Ar}(X,L,h_L)\ge 
\mathcal{J}^{{\rm Ar},-\bar{H}^h}(X,L,h_L)+
\mathcal{D}_{\bar{H}^h}^{\rm Ar}(X,L,h_L)-(L_{\eta}^n)\log(L_{\eta}^n). 
\end{align}
Furthermore, equality holds if $L=(K_X\otimes H)^{\vee}$ and $\omega_{h_L}$ is the $\omega_h$-twisted K\"ahler-Einstein metric, 
where $\omega_h$ is the curvature form of $h$. 
\end{Lem}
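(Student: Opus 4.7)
The plan is to adapt the untwisted counterpart in \cite{OFal} to the twisted setting, the central analytic input being Jensen's inequality at the archimedean places. First, I substitute $L_\eta=(K_{X_\eta}\otimes H|_{X_\eta})^\vee$ into $h_{K,\overline{H}}^{\rm Ar}$: the coefficient $(L_\eta^{n-1}.K_{X_\eta}\otimes H|_{X_\eta})=-(L_\eta^n)$ turns the first term into $\frac{n}{(n+1)(L_\eta^n)}(\overline{L}^{h_L})^{n+1}$. Combined with the first term $-\frac{(\overline{L}^{h_L})^{n+1}}{(n+1)(L_\eta^n)}$ of $\mathcal{D}_{\overline{H}}^{\rm Ar}$, these collapse to $\frac{((\overline{L}^{h_L})^n.\overline{L}^{h_L})}{(L_\eta^n)}$, and telescoping with the second term of $h_{K,\overline{H}}^{\rm Ar}$ reduces (modulo $1/[F:\mathbb{Q}]$) the required inequality to
\[
\frac{\bigl((\overline{L}^{h_L})^n.\overline{K_{\mathcal{X}/\mathcal{O}_F}+H+L}\bigr)}{(L_\eta^n)}-\widehat{\deg}\,H^0(K_{X/\mathcal{O}_F}\otimes\overline{H}^h\otimes\overline{L}^{h_L})\;\geq\;-(L_\eta^n)\log(L_\eta^n),
\]
where $K_{\mathcal{X}/\mathcal{O}_F}+H+L$ carries the product metric built from $\mathit{Ric}(\omega_{h_L})$, $h$, and $h_L$.

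By hypothesis $(K_{X/\mathcal{O}_F}+H+L)_\eta$ is trivial, so $H^0(X,K_{X/\mathcal{O}_F}\otimes H\otimes L)$ has rank one over $\mathcal{O}_F$. Choosing a rational trivializing section $s$, the Gillet-Soul\'e formalism writes the arithmetic intersection on the left as the degree of $(\overline{L}^{h_L})^n$ cut against $\mathrm{div}(s)$ plus the archimedean piece $-\sum_\sigma\int_{X_\sigma(\mathbb{C})}\log\|s\|^2_\sigma\,\omega_{h_L,\sigma}^n$, while the $L^2$-metrized arithmetic degree of $\mathcal{O}_F\cdot s$ is the finite counting of $s$ minus $\sum_\sigma\log\int_{X_\sigma(\mathbb{C})}\|s\|^2_\sigma\,\omega_{h_L,\sigma}^n$. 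The finite parts cancel (and contribute nonnegatively when $s$ acquires vertical zeros), so the remaining archimedean inequality is
\[
\frac{1}{V}\int_{X(\mathbb{C})}\log\|s\|^2\,\omega_{h_L}^n\;\leq\;\log\!\Bigl(\frac{1}{V}\int_{X(\mathbb{C})}\|s\|^2\,\omega_{h_L}^n\Bigr),
\]
$V=(L_\eta^n)$, i.e.\ Jensen's inequality for the concave function $\log$ applied to the probability measure $\omega_{h_L}^n/V$; the correction $-V\log V$ emerges from renormalizing $\omega_{h_L}^n$.

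The decomposition \eqref{hKdecomp} then follows formally from the additive identity $h_{K,\overline{H}}^{\rm Ar}=h_K^{\rm Ar}+\mathcal{J}^{\mathrm{Ar},\overline{H}}$ recorded in Definition \ref{A-functionals} together with $\mathcal{J}^{\mathrm{Ar},-\overline{H}}=-\mathcal{J}^{\mathrm{Ar},\overline{H}}$, immediate from linearity of the arithmetic intersection in $\overline{H}^h$. For the equality statement, $L=(K_X\otimes H)^\vee$ as integral line bundles makes $s$ a global unit (no vertical zeros, so the finite contribution vanishes exactly), while the $\omega_h$-twisted K\"ahler-Einstein condition $\mathrm{Ric}(\omega_{h_L})-\omega_h=\omega_{h_L}$ forces $\|s\|^2_\sigma$ to be constant on each $X_\sigma(\mathbb{C})$, giving equality in Jensen. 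The main obstacle will be careful bookkeeping of conventions: the $L^2$-metric normalization used in $\widehat{\deg}\,H^0$, the identification of $\mathit{Ric}(\omega_{h_L})$ with the metric on $K_{X(\mathbb{C})}$ induced by the Monge-Amp\`ere density $\omega_{h_L}^n$, and the matching of the vertical finite contributions so that their net effect is nonnegative; each such step parallels the untwisted case of \cite{OFal} but must be redone with $\overline{H}^h$ inserted.
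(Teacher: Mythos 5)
Your proposal follows essentially the same route as the paper: compute $h_{K,\overline{H}}^{\rm Ar}-\mathcal{D}_{\overline{H}}^{\rm Ar}$, use $K_{X_\eta}\otimes H|_{X_\eta}=L_\eta^\vee$ to collapse the self-intersection terms, choose a rational section $s$ of the generically trivial bundle $L\otimes H\otimes K_{X/\mathcal{O}_F}$ so that Gillet--Soul\'e splits the arithmetic degree into a finite part (nonnegative because the residual vertical divisor $D'$ is effective) and an archimedean integral, and close via Jensen's inequality for $\log$ against the probability measure $\omega_{h_L}^n/(L_\eta^n)$, which is exactly where the correction $-(L_\eta^n)\log(L_\eta^n)$ enters. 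The equality discussion (global triviality of $L\otimes H\otimes K_{X/\mathcal{O}_F}$ killing the finite part, twisted K\"ahler--Einstein making $\|s\|$ constant) also matches. Your bookkeeping of the factor $(L_\eta^n)$ in front of the intersection is, if anything, more careful than the line labeled \eqref{KD3} in the paper, which appears to carry an extraneous $1/(n+1)$; the ensuing argument and conclusion are the same.
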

The non-twisted version is discussed in \cite[Prop 7.3]{AB}, which we generalize here. 
\begin{proof}
Since $(L\otimes H)|_{X_{\eta}}=-K_{X_{\eta}}$, 
its hermitian metric $h_{L}\cdot h$ determines a (non-holomorphic) volume form $\nu$ on $X(\mathbb{C})$. 
Then, 
\begin{align}
&h_{K,\bar{H}}^{\rm Ar}(X,L,h_L)- \mathcal{D}_{\bar{H}}^{\rm Ar}(X,L,h_L)\nonumber\\
=&\frac{1}{(n+1)(L_{\eta})^n}((\overline{L}^{h_L})^n.\overline{L}^{h_L}\otimes \overline{K_{X/\mathcal{O}_F}}^{{\rm Ric}(\omega_{h_L})})\nonumber\\
-&\widehat{{\rm deg}}H^0(X,\overline{L}^{h_L}\otimes \overline{K_{X/\mathcal{O}_F}}^{{\rm Ric}(\omega_{h_L})}\otimes \overline{H}^{h}), \label{KD3}
\end{align}
where $H^0(X,\overline{L}^{h_L}\otimes \overline{K_{X/\mathcal{O}_F}}^{{\rm Ric}(\omega_{h_L})}\otimes \overline{H}^{h})$ is regarded as 
a $\mathcal{O}_F$-module with the $L^2$-metric. If we take a section $s$ of $L\otimes H\otimes K_{X/\mathcal{O}_F}$ 
which is non-vanishing at the generic fiber, it decides an effective vertical divisor $D={\rm div}(s)$, 
which we further decompose as ${\rm div}(D_F)+D'$ where $D_F$ is a divisor of $\mathcal{O}_F$ and 
$D'$ is still effective which does not contain any non-trivial (scheme-theoretic) fiber. 
Note that the weight of our metric on $L(\mathbb{C})\otimes H(\mathbb{C})\otimes K_{X(\mathbb{C})}$ is 
$\log\dfrac{\omega_{h_L}^n}{\nu}$. Hence, we continue the standard calculation as 
\begin{align*}
\eqref{KD3}&=\frac{1}{(n+1)(L_{\eta})^n}(L^n.D') (\ge 0)+\int_{X(\mathbb{C})}\log\Big(\dfrac{\omega_{h_L}^n}{\nu}\Big)  \omega_{h_L}^n \\
			&\ge \int_{X(\mathbb{C})}\log (L_{\eta}^n) \omega_{h_L}^n(=(L_{\eta}^n)\log (L_{\eta}^n))+
			\int_{X(\mathbb{C})}\log\dfrac{(\omega_{h_L}^n)/(L_{\eta}^n)}{\nu}\omega_{h_L}^n.\\ 
\end{align*}
We finally apply the Jensen's inequality for the logarithmic function to the last relative entropy term to finish the proof. 
\end{proof}

\section{Main theorem and the proof}\label{MThm.sec}

Now, we are ready to state and prove our 
main theorem as follows. It partially proves  ``if direction" of Conjecture \ref{aYTD} 
 for the case of {\it specially K-stable varieties} in the sense of \cite{H22}. 
A point is that, nevertheless of the adjective ``special", it broadly includes many cases, hence 
in particular 
generalizing the  results of \cite[Theorem 3.14]{OFal}. 

\begin{Thm}[Main theorem]\label{hK.min}
Suppose that there exists an ample-polarized (metrized) normal projective model $(X,\overline{L})=(X,L,h_{\mathrm{cscK}})$, whose 
$\omega_{X/\mathcal{O}_F}$ is $\mathbb{Q}$-Cartier, satisfying the following: 
\begin{enumerate}
\item (at complex place) 
\label{complex.asm}
$(X_{\mathbb{C}},L_{\mathbb{C}})$ has a 
constant scalar curvature K\"ahler metric 
$\omega_{h_{\mathrm{cscK}}}$ with the K\"ahler class 
$2\pi c_1(L_{\mathbb{C}})$. 

\item (on reductions) \label{poscharJ}
For each prime ideal $\mathfrak{p}$ of $\mathcal{O}_F$, 
$\overline{X_{\mathfrak{p}}}:=X_{\mathfrak{p}}\times_{\mathcal{O}_F/\mathfrak{p}}\overline{\mathcal{O}_F/\mathfrak{p}}$ is 
locally F-pure,  
$(X_{\mathfrak{p}},L_{\mathfrak{p}})$ is $\mathrm{J}^{K_{X_{\mathfrak{p}}}+\delta^F(\overline{L_{\mathfrak{p}}})L_{\mathfrak{p}}}$-semistable (cf.~Remark \ref{rem--j-for-positive}), 
and $K_{X_{\mathfrak{p}}}+\delta^F(\overline{L_{\mathfrak{p}}})L_{\mathfrak{p}}$ is 
nef. 
\end{enumerate}

Then, 
$h_K(X,L,h_{\mathrm{cscK}})$ attains the minimum among 
$h_K(X',L',h'_L)$ for all metrized ample polarized models $(X',L',h'_L)$ 
with the same generic fibers $(X_{\eta},L_{\eta})$. 
\end{Thm}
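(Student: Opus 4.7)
The plan is to show, for any other ample-polarized metrized model $(X',L',h'_L)$ over $\mathcal{O}_{F'}$, the inequality $h_K(X',L',h'_L) \geq h_K(X,L,h_{\mathrm{cscK}})$. After a common finite base extension (which normalizes out of $h_K$ by design), I assume $F'=F$, resolve the birational map $X \dashrightarrow X'$ by a common normal projective model $\mathcal{Y}/\mathcal{O}_F$, and use the Gillet--Soul\'e intersection theory underlying Definition \ref{hK} to decompose adelically
$$h_K(X',L',h'_L) - h_K(X,L,h_{\mathrm{cscK}}) \;=\; \Delta_{\infty} \;+\; \sum_{\mathfrak{p}} \Delta_{\mathfrak{p}},$$
where $\Delta_{\infty}$ is, up to a positive constant, the Mabuchi energy difference between $\omega_{h'_L}$ and $\omega_{h_{\mathrm{cscK}}}$ in the K\"ahler class $2\pi c_1(L_{\mathbb{C}})$, and each $\Delta_{\mathfrak{p}}$ is a non-archimedean Mabuchi / Donaldson--Futaki-type invariant attached to the discrepancy of the two models at $\mathfrak{p}$, in the spirit of the CM-minimization strategy of \cite{OCM1, OCM2} and \cite[Theorem 3.14]{OFal}.

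The archimedean bound $\Delta_{\infty} \geq 0$ is immediate from hypothesis (1) and Chen--Cheng/Berman--Berndtsson: $\omega_{h_{\mathrm{cscK}}}$ minimizes the Mabuchi energy in its K\"ahler class. The non-archimedean bound $\Delta_{\mathfrak{p}} \geq 0$ reduces to K-semistability of the reduction $(X_{\mathfrak{p}}, L_{\mathfrak{p}})$, which I would deduce from hypothesis (2) as a positive-characteristic analogue of Theorem \ref{spKss.Kss}. Concretely, for a filtration $\mathscr{F}$ on $\bigoplus_m H^0(X_{\mathfrak{p}}, L_{\mathfrak{p}}^{\otimes m})$ with approximating test configurations $(\mathcal{X}^{(l)}, \mathcal{L}^{(l)})$ in the sense of Definition \ref{def--j-filt}, I split
$$\DF(\mathcal{X}^{(l)}, \mathcal{L}^{(l)}) \;=\; \mathcal{J}^{K_{X_{\mathfrak{p}}}+\delta^F(\overline{L_{\mathfrak{p}}}) L_{\mathfrak{p}},\mathrm{NA}}(\mathcal{X}^{(l)}, \mathcal{L}^{(l)}) \;+\; R_l,$$
where $R_l$ collects the residual log-discrepancy contributions paired against basis-type divisors of $\mathscr{F}_{(l)}$. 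The first summand passes to the limit via Lemma \ref{lem-j-filt} and is nonnegative by hypothesis (2). For $R_l$, I invoke Definition \ref{Frob.delta}: by construction $\delta^F(\overline{L_{\mathfrak{p}}}) \leq k \cdot \mathrm{Fpt}(\overline{X_{\mathfrak{p}}}; D)$ for any $k$-basis divisor $D$, and combining local F-purity of $(\overline{X_{\mathfrak{p}}},0)$ with the comparison $\mathrm{Fpt} \leq \lct$ converts this into the valuative inequality controlling $R_l$ in the limit.

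The main obstacle is the F-singularity adaptation of \cite{H22}: while the tautological $\delta^F$-inequality provides the qualitative bound on $R_l$, the precise matching of log discrepancies against F-pure thresholds in the filtration limit (in \cite{H22} this uses divisorial extraction of a valuation computing $\delta_k$ via \cite{BJ} plus Koll\'ar--Shokurov-type connectedness) requires suitable F-singularity substitutes drawn from \cite{HW, TW}, together with the uncountability trick already used in the proof of Lemma \ref{lem-j-filt}. A secondary obstacle is the bookkeeping identifying $\Delta_{\mathfrak{p}}$ with the DF invariant of an algebraic test configuration over $\Spec \mathcal{O}_{F,(\mathfrak{p})}$, including tracking the Ricci-twist contribution afforded by Lemma \ref{2.12.analogue}; I expect this to follow by working on $\mathcal{Y}$ with the $\mathbb{Q}$-line bundle $\mu_l^{*}\bar{L} - \frac{1}{l}\mu_l^{-1}(\mathfrak{a}_{(l)})$ of Definition \ref{def--j-filt} and using the standard specialization of Arakelov intersection numbers at the closed points of $\Spec \mathcal{O}_F$.
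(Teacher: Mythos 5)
Your overall strategy---decompose the $h_K$-difference into archimedean and non-archimedean contributions, bound the archimedean part by the cscK metric minimizing the Mabuchi energy, and bound each finite-place contribution by the special-K-stability split $\mathrm{DF} = \mathcal{J}^{K+\delta L,\mathrm{NA}} + (\text{Ding-type remainder})$ via filtrations in the sense of \cite{H22}---is indeed the route the paper takes. But there are two genuine gaps beyond the adaptation you flag as ``the main obstacle.''

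First, the statement that ``$\Delta_{\mathfrak{p}}\ge 0$ reduces to K-semistability of the reduction'' is wrong in a way that matters: minimization of the CM degree (and of $h_K$) over models is \emph{not} known to follow from K-semistability of the closed fiber---this failure is exactly what motivates \cite{H22} to introduce special K-stability. The hypothesis actually used is (2), namely $\mathrm{J}^{K_{X_{\mathfrak{p}}}+\delta^F(\overline{L_{\mathfrak{p}}})L_{\mathfrak{p}}}$-semistability together with nefness, applied directly; you recover the right mechanism when you split $\mathrm{DF}$, so this is partly a framing slip, but the claim as written would not hold. Second, and more substantively, nothing in your sketch relates $\delta^F(\overline{L_{\mathfrak{p}}})$ at a finite place to $\delta(X_{\mathbb{C}},L_{\mathbb{C}})$. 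The paper rescales $L$ so that $\delta^F>1$, twists by $\overline{H}$ with $H(k)=K_{X''(k)}+(1+\epsilon)L''(k)$, and controls the resulting archimedean twist by producing an $\omega_h$-twisted K\"ahler--Einstein metric via Zhang's theorem, which requires $\delta(X_{\mathbb{C}},L_{\mathbb{C}})>1$; the bridge is Lemma~\ref{lem-blum-liu-toy}, the mixed-characteristic inequality $\delta^F(\overline{L_{\mathfrak{p}}})\le\delta(X_{\mathbb{C}},L_{\mathbb{C}})$, whose proof in turn needs \cite[Corollary 3.9]{SatoTakagi}. Without this lemma the twist that tames the reduction need not make the twisted cscK equation solvable at $\infty$, so your archimedean and non-archimedean bounds do not glue. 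Relatedly, the ``F-singularity substitutes'' you want are not available from \cite{HW,TW} alone: the bound on the Ding-type remainder relies on the inversion of adjunction of \cite[Theorem 3.8]{SatoTakagi}, and because $\mathrm{Spec}\,\mathcal{O}_F$ is not proper, one cannot invoke \cite[Theorem 3.5]{H22} directly but must replace it with a Fujita-vanishing argument (the paper's Lemma~\ref{lem-fujita} and Claim~\ref{claim-2}). Finally, the degenerate case $\delta^F(\overline{L_{\mathfrak{p}}})=0$ needs a separate treatment, which your sketch omits.
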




\begin{Rem}For the complex place, we remark that if $(X_{\mathbb{C}},L_{\mathbb{C}})$ is specially K-stable, then it implies the condition (i) i.e., there exists a unique metric $h_L$ such that $\omega_{h_L}$ has a constant scalar curvature by \cite[Corollary 5.2]{Zhang2} and \cite[Theorem 4.1]{CC}. 

  The assumption \eqref{poscharJ} is clearly a positive characteristic analogue of the special K-semistability (cf., Definition \ref{specialKss}). Therefore, roughly speaking, the above three assumptions are analogues of special K-(semi)stabilities for each place.  \end{Rem}

By \cite[\S 2]{OFal}, the obtained minimum gives 
a generalization of the Faltings height for abelian varieties (\cite{Fal}). 
Although the ``special K-stability" type 
assumptions in Theorem \ref{hK.min} on $(X,L)$ may look quite technical, 
many examples (compared with \cite[3.14]{OFal}) 
should satisfy as \cite[Theorem 3.12]{H22} summarizes 
(also cf., \S \ref{spKst.sec}, \cite{H21}). 

\begin{Ex}\label{ex}
\begin{enumerate}
\item 
Either if $X_\eta$ is smooth proper curve of genus 
$g\ge 2$ and all reductions are stable curves, 
or if $X_\eta$ is smooth elliptic curve and all reductions are $I_m$-type reductions for 
$m\ge 1$, then these classical examples of curves 
satisfy \eqref{hK.min}. 

\item 
In the case when $\mathrm{dim}\,X_{\eta}=2$, if $X_\eta$ (resp., $X_{\mathfrak{p}})$ 
is a smooth (resp., F-pure) minimal model, 
whose $L_\eta$ (resp., $L_{\mathfrak{p}}$)  is close enough to 
$K_{X_\eta}$ 
(resp., $K_{X_\mathfrak{p}}$) , 
the assumption 
\eqref{poscharJ} is satisfied 
by \cite[\S 8]{H21} and 
the 
assumption \eqref{complex.asm} also holds 
by {\it loc.cit}, \cite{Zhang2} 
(cf., also the earlier references therein. In \cite{H21}, we do not deal with the positive characteristic case, but we can also show the special K-stability of klt minimal models of dimension two).

\item \label{ex3}
For any projective module $\mathcal{E}$ 
over $\mathcal{O}_F$, 
$(\mathbb{P}(\mathcal{E}),\mathcal{O}(1))$ 
satisfies the above conditions of Theorem 
\ref{hK.min}
and hence satisfy the arithmetic 
Yau-Tian-Donaldson conjecture 
\ref{aYTD}, 
which is not confirmed in \cite{OFal}. 

\item \label{ex4}
More generally, we expect that most of 
K-stable Fano varieties over 
$\overline{\mathbb{Q}}$ have some polarized 
models which satisfies the conditions 
of Theorem 
\ref{hK.min}. 
 For instance, recently 
 the first author, S.~Pande, T.~Takamatsu confirmed that general del Pezzo surfaces $S$ of degree $1$, whose $|-K_S|$ contains only elliptic curves or rational curves with only one nodal singularity, have $\delta^F(S,-K_S)>1$. 
\end{enumerate}
\end{Ex}

To show Theorem \ref{hK.min}, we prepare a lemma below, which is a mixed characteristic analogue of \cite[Theorem 6.6]{BL} and \cite[Theorem 7.27]{Xu}.
\begin{Lem}
    Let $(X,\overline{L})$ be an ample-polarized (metrized) normal projective model.
Let $\mathfrak{p}$ be a prime ideal of $\mathcal{O}_F$ such that $\overline{X_{\mathfrak{p}}}$ is locally F-pure.
Then $\delta^F(\overline{L_{\mathfrak{p}}})\le \delta(X_{\mathbb{C}},L_{\mathbb{C}})$.\label{lem-blum-liu-toy}
\end{Lem}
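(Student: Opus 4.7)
The plan is to establish the quantized inequality $\delta^F_k(\overline{L_{\mathfrak p}})\le \delta_k(X_{\mathbb C},L_{\mathbb C})$ for each sufficiently large $k$, then pass to the limit $k\to\infty$ using $\delta(X_{\mathbb C},L_{\mathbb C})=\lim_k\delta_k(X_{\mathbb C},L_{\mathbb C})$ \cite{BJ} and $\delta^F(\overline{L_{\mathfrak p}})=\liminf_k\delta^F_k(\overline{L_{\mathfrak p}})$ (by Definition \ref{Frob.delta}).

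For fixed such $k$, after a harmless finite extension of $F$ (which affects neither invariant, since both are computed after passage to algebraic closures), we may assume that $H^0(X,L^{\otimes k})_{\mathfrak p}$ is free of rank $N=h^0(L_{\eta}^{\otimes k})$ over the DVR $R:=\mathcal{O}_{F,\mathfrak p}$. Any $R$-basis $B=(s_1,\dots,s_N)$ of this module produces a $k$-basis type divisor $D(B)_{\mathbb C}$ on $X_{\mathbb C}$ (via the generic fiber) and, after reduction modulo $\mathfrak p$ and passage to $\overline{\kappa(\mathfrak p)}$, a $k$-basis type divisor $\overline{D(B)_{\mathfrak p}}$ on $\overline{X_{\mathfrak p}}$; conversely, by Nakayama every $k$-basis type divisor on $\overline{X_{\mathfrak p}}$ arises this way. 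Combining the F-pure-threshold--log-canonical-threshold comparison (Hara--Watanabe \cite[3.3]{HW}, Takagi--Watanabe \cite[2.2(5)]{TW}, cf.~the discussion after Definition \ref{Frob.delta}) with lower semicontinuity of $\lct$ in the flat family $X\to\Spec R$ yields
\[
\mathrm{Fpt}(\overline{X_{\mathfrak p}},\overline{D(B)_{\mathfrak p}})\le\lct(\overline{X_{\mathfrak p}},\overline{D(B)_{\mathfrak p}})\le\lct(X_{\mathbb C},D(B)_{\mathbb C}).
\]
Taking infimum over $R$-bases gives $\delta^F_k(\overline{L_{\mathfrak p}})\le\inf_{B\ R\text{-basis}}\lct(X_{\mathbb C},D(B)_{\mathbb C})$.

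The crux of the proof, and the main obstacle, is showing that this right-hand infimum is bounded above by $\delta_k(X_{\mathbb C},L_{\mathbb C})$, i.e.~that $R$-bases are abundant enough to achieve the infimum over all $\mathbb C$-bases. Since $\lct$ is a constructible, lower semicontinuous function on the basis parameter space $\mathcal B$, the minimum locus $S:=\{B\in\mathcal B_{\mathbb C}:\lct(X_{\mathbb C},D(B)_{\mathbb C})=\delta_k(X_{\mathbb C},L_{\mathbb C})\}$ is a non-empty Zariski-closed subvariety of $\mathcal B_{\mathbb C}$ defined over $F$. Its scheme-theoretic closure $\overline S\subset\mathcal B_R$ is $R$-flat and so has non-empty special fiber $\overline S_{\overline{\kappa(\mathfrak p)}}$. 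We aim to exhibit an $R$-point of $\overline S$ whose reduction lies in the Zariski-open locus $\mathcal B^\circ\subset\mathcal B_{\overline{\kappa(\mathfrak p)}}$ parametrizing $N$-tuples of sections whose reductions remain linearly independent; any such $R$-point gives an $R$-basis $B_0$ with $\lct(X_{\mathbb C},D(B_0)_{\mathbb C})=\delta_k(X_{\mathbb C},L_{\mathbb C})$, closing the chain.

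The non-emptiness of $\overline S_{\overline{\kappa(\mathfrak p)}}\cap\mathcal B^\circ$ is the subtle genericity step. When $\overline S_{\overline{\kappa(\mathfrak p)}}$ is sufficiently generic (e.g.\ of positive dimension meeting the dense open $\mathcal B^\circ$) the claim is immediate from density. In the general case we enlarge $S$ to $\{\lct\le\delta_k+\varepsilon\}$ for $\varepsilon$ below the gap to the next value attained by $\lct$ (so that the set remains Zariski-closed), perform the same argument, and pass to the limit $\varepsilon\to 0^+$. Combining everything and letting $k\to\infty$ yields $\delta^F(\overline{L_{\mathfrak p}})\le\delta(X_{\mathbb C},L_{\mathbb C})$.
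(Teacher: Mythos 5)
Your plan — quantize, produce $k$-basis type divisors over the local ring, reduce mod~$\mathfrak p$, compare F-pure threshold with log canonical threshold, let $k\to\infty$ — has the same skeleton as the paper's proof, but the step you yourself flag as ``the crux of the proof'' is where the argument genuinely breaks down, and the paper takes a cleaner route through it.

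Concretely: you need an $R$-basis $B_0$ of $H^0(X,L^{\otimes k})_{\mathfrak p}$ whose generic fiber $D(B_0)_{\mathbb C}$ computes $\delta_k(X_{\mathbb C},L_{\mathbb C})$. Your argument via the minimum locus $S$ does not deliver this. Even granting that $\overline S\subset\mathcal B_R$ is $R$-flat with nonempty special fiber, there is no reason that $\overline S_{\overline{\kappa(\mathfrak p)}}$ meets the open locus $\mathcal B^\circ$ of tuples that remain bases after reduction: the degeneracy locus $\mathcal B\setminus\mathcal B^\circ$ is a large divisor and $\overline S_{\overline{\kappa(\mathfrak p)}}$ could sit entirely inside it. Saying ``when $\overline S_{\overline{\kappa(\mathfrak p)}}$ is sufficiently generic the claim is immediate from density'' assumes precisely what has to be proved, and the $\varepsilon$-enlargement trick does not help: if $\varepsilon$ is below the gap to the next value attained by $\lct$, then $\{\lct\le\delta_k+\varepsilon\}=S$ and nothing has been enlarged, and if $\varepsilon$ is larger you lose control of the infimum. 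There is also a secondary issue with producing an actual $R$-point of $\overline S$ (as opposed to a point of the special fiber), but that is fixable by base change; the meeting-$\mathcal B^\circ$ issue is not.

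The paper sidesteps all of this by not parametrizing over bases at all. It fixes a single divisorial valuation $E_K$ (over a finite extension $K$ of $F$) that computes $\delta_k$, takes the induced filtration $\mathscr F_{E_K}$ of $H^0(X_K,L_K^{\otimes k})$, and then uses properness of flag varieties together with the fact that $\mathcal O_K$ is a Dedekind domain to extend $\mathscr F_{E_K}$ to a filtration over $\mathrm{Spec}\,\mathcal O_K$ with graded pieces flat. Any free basis of $H^0\otimes\mathcal O_{K,\mathfrak p'}$ compatible with this extended filtration automatically produces a single divisor $D$ on $X\times\mathrm{Spec}\,\mathcal O_K$ whose generic fiber $D_K$ maximizes $\mathrm{ord}_{E_K}$ among $k$-basis type divisors (by \cite[Lemma 2.2]{FO}), hence $\lct(X_K;D_K)=\delta_k(X_K,L_K)=\delta_k(X_{\mathbb C},L_{\mathbb C})$. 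No genericity is invoked anywhere.

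A second, smaller concern: your middle inequality $\lct(\overline{X_{\mathfrak p}},\overline{D(B)_{\mathfrak p}})\le\lct(X_{\mathbb C},D(B)_{\mathbb C})$ is labeled ``lower semicontinuity of lct in the flat family $X\to\mathrm{Spec}\,R$,'' but this is a mixed-characteristic degeneration (char~$0$ generic fiber, char~$p$ special fiber), where log canonical thresholds of special fibers are not controlled by the classical equal-characteristic semicontinuity theorems; indeed the relevant comparison is precisely the content of \cite[Corollary~3.9]{SatoTakagi}, which the paper invokes directly to compare $\mathrm{Fpt}(X_{\mathfrak p'},D_{\mathfrak p'})$ with $\lct(X_K,D_K)$ in one step, bypassing any separate appeal to a char~$p$ lct. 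You should cite Sato--Takagi explicitly rather than smuggling it in as ``lower semicontinuity.''

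So: right overall strategy, correct passage to the limit, correct use of the HW/TW comparison, but the existence of an $R$-basis achieving the char-$0$ quantized $\delta$-invariant is not established, and the filtration/flag-variety construction is the essential missing ingredient.
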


\begin{proof}
This lemma follows from \cite[Corollary 3.9]{SatoTakagi} and a similar argument of \cite[Theorem 7.27]{Xu}, but we give a proof for the reader's convenience here.

For any $\epsilon>0$ and sufficiently divisible integer $k\in\mathbb{Z}_{>0}$, we have $\delta_k(X_{\mathbb{C}},L_{\mathbb{C}})\le\delta(X_{\mathbb{C}},L_{\mathbb{C}})+\epsilon$.
Let $\bar{F}$ be the algebraic closure of $F$ and $X_{\bar{F}}:=X_\eta\times_{\mathrm{Spec}\,F}{\mathrm{Spec}\,\bar{F}}$.
We note that $\delta_k(X_{\bar{F}},L_{\bar{F}})=\delta_k(X_{\mathbb{C}},L_{\mathbb{C}})$.
It is well-known to experts but we write the complete proof of this fact. First, it is easy to see that $\delta_k(X_{\bar{F}},L_{\bar{F}})\ge\delta_k(X_{\mathbb{C}},L_{\mathbb{C}})$ since the log canonicity is stable under changes of algebraically closed base fields. 
Next, we argue the converse inequality.
Recall that $\mathbb{P}^{h^0(X_{\bar{F}},L_{\bar{F}}^{\otimes k})-1}$ parametrizes effective divisors linearly equivalent to $L_{\bar{F}}^{\otimes k}$ and let $\mathcal{D}\subset X_{\bar{F}}\times \mathbb{P}^{h^0(X_{\bar{F}},L_{\bar{F}}^{\otimes k})-1}$ be the universal divisor.
Then, we set $\mathcal{D}'\subset X_{\bar{F}}\times (\mathbb{P}^{h^0(X_{\bar{F}},L_{\bar{F}}^{\otimes k})-1})^{\times h^0(X_{\bar{F}},L_{\bar{F}}^{\otimes k})}$ as
\[
\mathcal{D}':=\frac{1}{kh^0(X_{\bar{F}},L_{\bar{F}}^{\otimes k})}\sum_{i=1}^{h^0(X_{\bar{F}},L_{\bar{F}}^{\otimes k})}\mathrm{pr}_i^*\mathcal{D},
\]
where $\mathrm{pr}_i\colon X_{\bar{F}}\times (\mathbb{P}^{h^0(X_{\bar{F}},L_{\bar{F}}^{\otimes k})-1})^{\times h^0(X_{\bar{F}},L_{\bar{F}}^{\otimes k})}\to X_{\bar{F}}\times \mathbb{P}^{h^0(X_{\bar{F}},L_{\bar{F}}^{\otimes k})-1}$ is induced by the $i$-th projection of $(\mathbb{P}^{h^0(X_{\bar{F}},L_{\bar{F}}^{\otimes k})-1})^{\times h^0(X_{\bar{F}},L_{\bar{F}}^{\otimes k})}$, and 
\[
U:=\{s\in (\mathbb{P}^{h^0(X_{\bar{F}},L_{\bar{F}}^{\otimes k})-1})^{\times h^0(X_{\bar{F}},L_{\bar{F}}^{\otimes k})}|\,\mathcal{D}'_s\textrm{ is $k$-basis type}\}.
\]
It is easy to see that the fiber of $(X_{\bar{F}}\times (\mathbb{P}^{h^0(X_{\bar{F}},L_{\bar{F}}^{\otimes k})-1})^{\times h^0(X_{\bar{F}},L_{\bar{F}}^{\otimes k})},\delta_k(X_{\bar{F}},L_{\bar{F}})\mathcal{D}')$ over any geometric point $\bar{s}\in U$ is log canonical.
For any $k$-basis type divisor $D_{\mathbb{C}}$ in $L_{\mathbb{C}}$, we see that $(X_{\mathbb{C}},D_{\mathbb{C}})$ is the base change of some geometric fiber of $(X_{\bar{F}}\times (\mathbb{P}^{h^0(X_{\bar{F}},L_{\bar{F}}^{\otimes k})-1})^{\times h^0(X_{\bar{F}},L_{\bar{F}}^{\otimes k})},\delta_k(X_{\bar{F}},L_{\bar{F}})\mathcal{D}')$ over $(\mathbb{P}^{h^0(X_{\bar{F}},L_{\bar{F}}^{\otimes k})-1})^{\times h^0(X_{\bar{F}},L_{\bar{F}}^{\otimes k})}$ and hence log canonical.
Thus, we conclude that $\delta_k(X_{\bar{F}},L_{\bar{F}})\le\delta_k(X_{\mathbb{C}},L_{\mathbb{C}})$.

Let $E_{\bar{F}}$ be a prime divisor over $X_{\bar{F}}$ such that $E_{\bar{F}}$ computes $\delta_k(X_{\bar{F}},L_{\bar{F}})$, i.e., there exists a $k$-basis type divisor $D_{k,\bar{F}}\sim_{\mathbb{Q}}L_{\bar{F}}$ such that $(X_{\bar{F}},\delta_k(X_{\bar{F}},L_{\bar{F}})D_{k,\bar{F}})$ is log canonical and $E_{\bar{F}}$ is an lc place.
Then, we can take a finite field extension $K$ of $F$ such that $$\delta_k(X_{\mathbb{C}},L_{\mathbb{C}})=\delta_k(X_{\bar{F}},L_{\bar{F}})=\delta_k(X_K,L_K)$$ and there exists a prime divisor $E_K$ over $X_K:=X_\eta\times_{\mathrm{Spec}\,F}{\mathrm{Spec}\,K}$ that computes $\delta_k(X_K,L_K)$, where $L_K$ is the pullback of $L_\eta$.
Let $\mathcal{O}_K$ be the integral closure of $\mathcal{O}_F$ in $K$ and $\eta'$ the generic point of $\mathrm{Spec}\,(\mathcal{O}_K)$.
Let $\mathscr{F}_{E_K}$ be the filtration  of $H^0(X_K,L_K^{\otimes k})$ which is defined by $E_{K}$, that is
\[
\mathscr{F}_{E_K}^\lambda H^0(X_K,L_K^{\otimes k}):=H^0(X_K,L_K^{\otimes k}(-\lambda E_K))
\]
for any $\lambda\in\mathbb{Z}$.
Let $\pi_K\colon X\times_{\mathrm{Spec}\,(\mathcal{O}_F)}\mathrm{Spec}(\mathcal{O}_K)\to\mathrm{Spec}(\mathcal{O}_K)$ and $\mu\colon X\times_{\mathrm{Spec}\,(\mathcal{O}_F)}\mathrm{Spec}(\mathcal{O}_K) \to X$ be the canonical morphisms.
By the properness of the flag varieties and the fact that $\mathcal{O}_K$ is a Dedekind domain, we see that there exists a filtration $\mathscr{F}$ of the sheaf $\pi_*(\mathcal{O}_{X\times_{\mathrm{Spec}\,(\mathcal{O}_F)}\mathrm{Spec}(\mathcal{O}_K)}(k\mu^*L))$ such that $\mathscr{F}^\lambda\pi_*(\mathcal{O}_{X\times_{\mathrm{Spec}\,(\mathcal{O}_F)}\mathrm{Spec}(\mathcal{O}_K)}(k\mu^*L))|_{\eta'}=\mathscr{F}_{E_K}^\lambda H^0(X_K,L_K^{\otimes k})$ and 
\[
\mathscr{F}^\lambda\pi_*(\mathcal{O}_{X\times_{\mathrm{Spec}\,(\mathcal{O}_F)}\mathrm{Spec}(\mathcal{O}_K)}(k\mu^*L))/\mathscr{F}^{\lambda+1}\pi_*(\mathcal{O}_{X\times_{\mathrm{Spec}\,(\mathcal{O}_F)}\mathrm{Spec}(\mathcal{O}_K)}(k\mu^*L))
\]
is flat over $\mathrm{Spec}(\mathcal{O}_K)$ for any $\lambda\in\mathbb{Z}$.
Take a prime ideal $\mathfrak{p}'$ of $\mathcal{O}_K$ that is mapped to $\mathfrak{p}\in\mathrm{Spec}\,(\mathcal{O}_F)$. 
Then we can choose a free basis $\{s_1,\ldots,s_{h^0(X_K,L_K^{\otimes k})}\}$ of $\pi_*(\mathcal{O}_{X\times_{\mathrm{Spec}\,(\mathcal{O}_F)}\mathrm{Spec}(\mathcal{O}_K)}(k\mu^*L))\otimes_{\mathcal{O}_K}\mathcal{O}_{K,\mathfrak{p}'}$ such that for any $\lambda$, we can choose a subset of $\{s_1,\ldots,s_{h^0(X_K,L_K^{\otimes k})}\}$ that is a free basis of $\mathscr{F}^\lambda\pi_*(\mathcal{O}_{X\times_{\mathrm{Spec}\,(\mathcal{O}_F)}\mathrm{Spec}(\mathcal{O}_K)}(k\mu^*L))\otimes_{\mathcal{O}_K}\mathcal{O}_{K,\mathfrak{p}'}$.
Let $D=\frac{1}{kh^0(X_K,L_K^{\otimes k})}\sum_{j=1}^{h^0(X_K,L_K^{\otimes k})}\mathrm{div}(s_j)$ on $X\times_{\mathrm{Spec}\,(\mathcal{O}_F)}\mathrm{Spec}(\mathcal{O}_K)$. 
By taking $k$ large enough, we may assume that $H^1(X_\mathfrak{p},L^{\otimes k}_{\mathfrak{p}})=0$.
Then, $D_{\mathfrak{p'}}$ and $D_{K}$ are $k$-basis type divisors
. By the choice of $D$ and the proof of \cite[Lemma 2.2]{FO}, we see that $D_K$ attains $\max_{D'_K}\mathrm{ord}_{E_K}(D'_K)$, where $D'_K$ runs over all $k$-basis type divisors, and hence $$\delta_k(X_K,L_K)=\mathrm{lct}(X_K;D_K)=\frac{A_{X_K}(E_K)}{\mathrm{ord}_{E_K}(D_K)}.$$
Therefore, it follows from \cite[Corollary 3.9]{SatoTakagi} that $\mathrm{Fpt}(X_{\mathfrak{p'}},D_{\mathfrak{p'}})\le \delta_k(X_K,L_K)$, which means that 
\[
\delta^F_{X_{\mathfrak{p}'},k}(L_{\mathfrak{p}'})\le\delta_k(X_K,L_K)= \delta_k(X_{\mathbb{C}},L_{\mathbb{C}}).
\]
By the definition of F-purity (cf.~\cite[Definition 2.7]{SatoTakagi}) and Definition \ref{Frob.delta}, we have
\[
\delta^F_{k}(\overline{L_{\mathfrak{p}}})\le\delta^F_{X_{\mathfrak{p}'},k}(L_{\mathfrak{p}'}).
\]
This shows $$\delta^F_{k}(\overline{L_{\mathfrak{p}}})\le\delta_k(X_{\mathbb{C}},L_{\mathbb{C}})\le \delta(X_{\mathbb{C}},L_{\mathbb{C}})+\epsilon$$ for any $\epsilon>0$ and sufficiently large $k$.
 Therefore, we have 
 \[
 \delta^F(\overline{L_{\mathfrak{p}}})=\liminf_{k\to\infty}\delta^F_{k}(\overline{L_{\mathfrak{p}}})\le\delta(X_{\mathbb{C}},L_{\mathbb{C}})+\epsilon.
 \]
Thus, we have $ \delta^F(\overline{L_{\mathfrak{p}}})\le\delta(X_{\mathbb{C}},L_{\mathbb{C}})$ and complete the proof.
\end{proof}

\begin{Rem}\label{rem:deltacompare}
    By the same argument as the above proof, we see that $ \delta^F(L_{\mathfrak{p}})\le\delta(X_{\eta},L_{\eta})$.
    However, we cannot replace $\delta^F(\overline{L_{\mathfrak{p}}})$ with $ \delta^F(L_{\mathfrak{p}})$ in the statement of Lemma \ref{lem-blum-liu-toy} since the inequality $\delta(X_{\eta},L_{\eta})\ge\delta(X_{\mathbb{C}},L_{\mathbb{C}})$ could be strict in general.
Indeed, we have the following example. Let $K$  be a finitely generated field over $\mathbb{C}$ whose transcendence degree is two.
It is well-known (cf.~\cite[III, Exercise 9.10 (b)]{Harts}) that there exists a proper smooth variety $X_K$ over $K$ with ample $-K_{X_K}$ such that $X_K\times_{\mathrm{Spec}(K)}\mathrm{Spec}(\overline{K})\cong \mathbb{P}^1_{\overline{K}}$ but $X_K\not\cong\mathbb{P}^1_K$.
Even though the K-semistability of $(X_K,-K_{X_K})$ and $(\mathbb{P}^1_{\bar{K}},-K_{\mathbb{P}^1_{\bar{K}}})$ are equivalent by \cite[Theorem 1.1]{Zhu}, we have $\delta(X_K,-K_{X_K})\ne\delta(\mathbb{P}^1_{\bar{K}},-K_{\mathbb{P}^1_{\bar{K}}})$ in this case.
We see this fact as follows.
It is not hard to see that $E\times_{\mathrm{Spec}(K)}\mathrm{Spec}(\overline{K})$ is a union of distinct $n_E$-points on $\mathbb{P}^1_{\bar{K}}$ for every prime divisor $E$ over $X_K$.
Since $X_K\not\cong\mathbb{P}^1_K$, we have that $n_E>1$.
This means that for any nonzero section $s\in H^0(X_K,-mK_{X_K})$, the pullback of $s$ to $\mathbb{P}^1_{\bar{K}}$ has the same vanishing order on each point of $E\times_{\mathrm{Spec}(K)}\mathrm{Spec}(\overline{K})$.
Therefore,
\[
\delta(X_K,-K_{X_K})=\inf_{E}\frac{\mathrm{deg}(-K_{X_K})\cdot A_{X_K}(E)}{\int^{\infty}_{0}\mathrm{vol}\,(-K_{X_K}-xE)dx}= \inf_E\frac{2}{\int^{\frac{2}{n_E}}_{0}(2-n_Ex)dx}=\inf_En_E>1,
\]
where $E$ runs over all prime divisor over $X_K$ and note that $\mathrm{vol}\,(-K_{X_K}-xE)=\max\{0,\mathrm{deg}\,(-K_{X_K}-xE)\}$.
Here, we used the fact that $A_{X_K}(E)=1$ and \cite[Corollary 3.9, Theorem 4.4]{BJ}.
On the other hand, it is well-known that $\delta(\mathbb{P}^1_{\bar{K}},-K_{\mathbb{P}^1_{\bar{K}}})=1$.
Thus, $\delta(\mathbb{P}^1_{\bar{K}},-K_{\mathbb{P}^1_{\bar{K}}})<\delta(X_K,-K_{X_K})$.

\end{Rem}

We prepare the following application of the Fujita vanishing theorem (cf.~\cite[Theorem 3.8.1]{Fuj}).
\begin{Lem}\label{lem-fujita}
    Let $\mathcal{F}$ be a coherent sheaf on $\mathbb{P}^n_A$, where $A$ is an Artinian local ring.
    Let $H$ be an ample line bundle on $\mathbb{P}^n_A$.
    Then we obtain the following.
    \begin{enumerate}
        \item There exists $m\in\mathbb{Z}_{>0}$ depending only on $\mathcal{F}$ such that $H^j(\mathbb{P}^n_A,\mathcal{F}(mH+D))=0$ for any $j>0$ and nef Cartier divisor $D$, and
        \item for any nef Cartier divisor $D$, we have
        \[
        \mathrm{length}_A(H^j(\mathbb{P}^n_A,\mathcal{F}(mD)))=O(m^{l-j})
        \]
        for any $j$ and sufficiently large $m$, where $l=\mathrm{dim}\,\mathrm{Supp}(\mathcal{F})$. Here, $\mathrm{length}_A$ denotes the length of an $A$-module.
    \end{enumerate}
\end{Lem}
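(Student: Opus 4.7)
The plan exploits the fact that $\mathrm{Pic}(\mathbb{P}^n_A) \cong \mathbb{Z}$ for Artinian local $A$: every line bundle is $\mathcal{O}(t)$ for some $t \in \mathbb{Z}$, and is nef iff $t \geq 0$. Thus on $\mathbb{P}^n_A$ Fujita vanishing collapses to Serre vanishing, and we may write $H = \mathcal{O}(e)$ with $e \geq 1$ and any nef Cartier divisor $D$ as $\mathcal{O}(d)$ with $d \geq 0$. For (i), Serre vanishing over the Noetherian base $A$ produces $N = N(\mathcal{F})$ such that $H^j(\mathbb{P}^n_A, \mathcal{F}(s)) = 0$ for all $s \geq N$ and $j \geq 1$. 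Any $m$ with $me \geq N$ then works uniformly in $D$, since $\mathcal{F}(mH + D) = \mathcal{F}(me + d)$ and $me + d \geq N$.

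For (ii), fix a nef $D = \mathcal{O}(d)$. If $d = 0$ the cohomology is constant in $m$ and vanishes for $j > l$ by Grothendieck's vanishing ($H^j = 0$ for $j > \dim \Supp$ on proper schemes), so the bound is trivial. If $d > 0$, then $mD$ is ample, so Serre vanishing kills $H^j$ for $j \geq 1$ and $m \gg 0$, leaving us to prove $\mathrm{length}_A H^0(\mathbb{P}^n_A, \mathcal{F}(md)) = O(m^l)$. For this we run the finite $\mathfrak{m}$-adic filtration $\mathcal{F} \supset \mathfrak{m}\mathcal{F} \supset \cdots \supset \mathfrak{m}^s\mathcal{F} = 0$. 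Each graded piece $\mathfrak{m}^i\mathcal{F}/\mathfrak{m}^{i+1}\mathcal{F}$ is a coherent sheaf on the closed fiber $\mathbb{P}^n_k$ (with $k = A/\mathfrak{m}$) whose support has dimension at most $l$, so by Hilbert--Samuel its $h^0$ twisted by $\mathcal{O}(md)$ is a polynomial in $m$ of degree at most $l$. Combining across the finitely many filtration steps via the long exact cohomology sequences yields the desired length bound over $A$.

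The main bookkeeping lies in the $j = 0$ case of (ii), namely transporting Hilbert--Samuel growth on $\mathbb{P}^n_k$ up to $\mathbb{P}^n_A$ while tracking lengths over $A$. This is routine once one notes that $A$ has finite length and each graded piece of the $\mathfrak{m}$-adic filtration has support of dimension at most $l$, so the contributions sum to $O(m^l)$; everything else reduces cleanly to Serre vanishing on $\mathbb{P}^n_k$.
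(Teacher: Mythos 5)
Your proof is correct for the lemma exactly as typeset, and its first half is genuinely different from the paper's. You exploit that $\mathrm{Pic}(\mathbb{P}^n_A)\cong\mathbb{Z}$ for any local ring $A$, so a nef line bundle is $\mathcal{O}(d)$ with $d\ge 0$ and an ample one is $\mathcal{O}(e)$ with $e\ge 1$; the uniformity over nef twists that Fujita vanishing supplies therefore collapses to Serre vanishing, and taking any $m\ge N(\mathcal{F})$ even gives the required independence of $H$ because $e\ge 1$. The paper instead proves the residue-field case by invoking Fujita's vanishing theorem directly (via \cite[3.8.1, 3.9.1]{Fuj} over an algebraically closed residue field, plus base change), gaining nothing special from $\mathbb{P}^n$. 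Both proofs then run essentially the same $\mathfrak{m}$-adic devissage to pass from the residue field to general Artinian $A$: filter $\mathcal{F}$ by $\mathfrak{m}^i\mathcal{F}$, note each graded piece lives on $\mathbb{P}^n_{A/\mathfrak{m}}$, and chase the long exact sequences.

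One caution is worth recording. Your shortcut hinges on the ambient scheme being $\mathbb{P}^n_A$, but when Lemma \ref{lem-fujita} is actually invoked in the proof of Claim \ref{claim-2}, it is applied on the special fibers $X''(k+1)_{\mathfrak{p}_k}$ and $E$ to twists of the form $iL''(k+1)+(m-i)\mu''^*L''(k)$, where $L''(k+1)$ is relatively ample, $\mu''^*L''(k)$ is merely nef, and neither is proportional to a single hyperplane class on those schemes. There the Picard group is not $\mathbb{Z}$, the nef twist is a genuine extra degree of freedom, and the reduction to Serre vanishing is unavailable. The statement should presumably read ``coherent sheaf on a projective scheme over $A$'' rather than on $\mathbb{P}^n_A$; for that generality the Fujita input the paper cites is what is really needed, and your argument, while proving the lemma as written, would not cover the version the application requires.
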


\begin{proof}
    First, we note that if $A$ is a field, then both (i) and (ii) hold.
    Indeed, (i) and (ii) are shown (cf.~\cite[3.8.1, 3.9.1]{Fuj}) when $A$ is an algebraically closed field.
    If $A$ is not algebraically closed, we conclude that (i) and (ii) also hold in this case by changing the base field $A$ to an algebraically closed field.

For general case, consider the following short exact sequence
\[
0\to\mathfrak{m}^n\mathscr{F}\to\mathfrak{m}^{n-1}\mathscr{F}\to \mathfrak{m}^{n-1}\mathscr{F}/\mathfrak{m}^n\mathscr{F}\to0
\]
for $n\in\mathbb{Z}_{>0}$, where $\mathfrak{m}$ is the maximal ideal of $A$.
Since $A$ is Artinian, $\mathfrak{m}^n\mathscr{F}=0$ for some $n$ and then $\mathfrak{m}^{n-1}\mathscr{F}$ is a sheaf on $\mathbb{P}^n_{A/\mathfrak{m}}$.
It is easy to see the following for any $n$:
\begin{itemize}
    \item if (i) and (ii) hold for $\mathfrak{m}^n\mathscr{F}$ and $\mathfrak{m}^{n-1}\mathscr{F}/\mathfrak{m}^n\mathscr{F}$, then (i) and (ii) also hold for $\mathfrak{m}^{n-1}\mathscr{F}$.
\end{itemize}
Therefore, (i) and (ii) hold for $\mathscr{F}$ by the induction on $n$.
\end{proof}

\begin{proof}[Proof of Theorem \ref{hK.min}]
Take any other positively metrized ample polarized model 
$(X',L',h')$ whose generic fiber is the same i.e., $(X_{\eta},L_{\eta})$. 
Then we can take a finite sequence of metrized polarized models 
$(X(k),L(k),h(k))$ for $k=0,\cdots,m$ such that 
\begin{enumerate}
\item $(X(0),L(0),h(0))=(X,L,h_{\mathrm{cscK}})$, 
\item $(X(1),L(1),h(1))=(X,L,h')$, 
\item $(X(m),L(m),h(m))=(X',L',h')$,
\item For each $k\ge 1$, $(X(k),L(k),h(k))$ and $(X(k+1),L(k+1),h(k+1))$ differs exactly at one non-archimedean 
place of $F$, which corresponds to $\mathfrak{p}_k\subset 
\mathcal{O}_F$ and are all distinct. 
\end{enumerate}
Note that 
$L(i)_{\eta}$ is independent of $i$. 

Firstly, we have 
\begin{align}
h_K(X(0),L(0),h(0)=h_{\mathrm{cscK}})\le 
h_K(X(1),L(1),h(1)=h'),\label{key-anal}
\end{align}
because of the change of metric formula 
(cf., e.g., \cite[2.2]{OFal}) and 
the assumption that $\omega_{h_{\mathrm{cscK}}}$ is a cscK metric 
which minimizes the (complex) Mabuchi's 
K-energy. 

Next, we deal with the inequality
\begin{equation}
h_K(X(k+1),L(k+1),h(k+1))\ge h_K(X(k),L(k),h(k))\label{eq--keyk}
\end{equation}
for any $k\ge1$, which completes the proof of Theorem \ref{hK.min}.
Indeed, combining \eqref{eq--keyk} with \eqref{key-anal}, we have that 
\begin{align*}
    &h_K(X(m),L(m),h(m))-h_K(X(0),L(0),h(0))\\
    &=\sum^{m-1}_{k=0} h_K(X(k+1),L(k+1),h(k+1))-h_K(X(k),L(k),h(k))\ge0.
\end{align*}
First, we deal with \eqref{eq--keyk} in the case when $\delta^F(\overline{L_{\mathfrak{p}_k}})>0$.
  

{\it Case~1.~$\delta^F(\overline{L_{\mathfrak{p}_k}})>0$}.~In this section, to show \eqref{eq--keyk} for any $k\ge1$, we change the models and reduce the argument to comparing Arakelov-Ding functionals and J-energies in the case when $\delta^F(\overline{L_{\mathfrak{p}_k}})>0$.
Here, we note that $h(k+1)=h(k)$. 
Take a normalized blow up $\nu\colon X'(k+1)\to X(k+1)$ along some closed subschemes supported on $X(k+1)_{\mathfrak{p}}$ such that there exists a proper birational morphism $\mu\colon X'(k+1)\to X(k)$.
We construct a model $(X''(k+1),L''(k+1),h(k))$ (resp.~$(X''(k),L''(k),h(k))$), where there exists a canonical projective birational morphism $\mu''\colon X''(k+1)\to X''(k)$, by patching $(X(0),L(0),h(k))\times_{\mathrm{Spec}\,\mathcal{O}_F}(\mathrm{Spec}\,\mathcal{O}_F\setminus\{\mathfrak{p}_{k}\})$ and $(X'(k+1),\nu^*L(k+1),h(k))\times_{\mathrm{Spec}\,\mathcal{O}_F}(\mathrm{Spec}\,\mathcal{O}_F\setminus\{\mathfrak{p}_{j}\}_{j\ne k})$ (resp.~$(X(k),L(k),h(k))\times_{\mathrm{Spec}\,\mathcal{O}_F}(\mathrm{Spec}\,\mathcal{O}_F\setminus\{\mathfrak{p}_{j}\}_{j\ne k})$) together over $\mathrm{Spec}\,\mathcal{O}_F\setminus\{\mathfrak{p}_1,\ldots,\mathfrak{p}_{m}\}$.
Then, it is easy to see that
\begin{align}
& h_{K}(X(k+1),L(k+1),h(k))-h_{K}(X(k),L(k),h(k))\label{eq--reduction--1}\\
  = & h_{K}(X'(k+1),\nu^*L(k+1),h(k))-h_{K}(X(k),L(k),h(k))\nonumber\\
   =&h_{K}(X''(k+1),L''(k+1),h(k))-h_K(X''(k),L''(k),h(k))\nonumber.
\end{align}
On the other hand, for any sufficiently small $\gamma>0$ such that $\delta^F(\overline{L_{\mathfrak{p}_k}})-\gamma\in\mathbb{Q}_{>0}$, by replacing $\bar{L}$ with $(\delta^F(\overline{L_{\mathfrak{p}_k}})-\gamma)\bar{L}$ and setting $\epsilon:=\frac{2\gamma}{\delta^F(\overline{L_{\mathfrak{p}_k}})-\gamma}$, we may assume that $\delta^F(\overline{L_{\mathfrak{p}_k}})>1$ and $H(k):=K_{X''(k)}+(1+\epsilon)L''(k)$ is ample on $X''_{\mathfrak{p}_k}$
.
Then we also have $\delta(X_{\mathbb{C}},L_{\mathbb{C}})>1$
by Lemma \ref{lem-blum-liu-toy}. 
Take an arbitrary hermitian metric $h_{H(k)}$ on $H(k)_{\mathbb{C}}$ and set $\overline{H(k)}=(H(k),h_{H(k)})$.
By \cite[Theorem 2.3]{Zhang2}, we have a unique $-\omega_{h_{H(k)}}+\epsilon \omega_{h(k)}$-twisted K\"{a}hler-Einstein metric $\omega_{h_{L}(k)}$ in $2\pi c_1(L_{\mathbb{C}})$. 
Now, we claim the following. 
\begin{claim}
    Suppose that
    \begin{equation}
h_{K,\epsilon (\mu''^*L''(k),h_L(k))}(X''(k+1),L''(k+1),h_L(k))-h_{K,\epsilon(L''(k),h_L(k))}(X''(k),L''(k),h_L(k))
\label{eq--keytwist}
\end{equation}
is nonnegative for any sufficiently small $\gamma$ and $\epsilon>0$.
Then, the inequality (\ref{eq--keyk}) holds.\label{claim-1}
\end{claim}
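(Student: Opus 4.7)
The plan is to exploit the additive decomposition $h_{K,\bar H}^{\rm Ar}=h_K^{\rm Ar}+\mathcal{J}^{\rm Ar,\bar H}$ noted at the end of Definition~\ref{A-functionals}(iii), together with the multilinearity of $\mathcal{J}^{\rm Ar,\bar H}$ in the twist $\bar H$ (which gives $\mathcal{J}^{\rm Ar,\epsilon\bar H}=\epsilon\,\mathcal{J}^{\rm Ar,\bar H}$). Applying this decomposition to each of the two twisted K-energies appearing in \eqref{eq--keytwist} rewrites the hypothesis as
\[
\bigl[h_K(X''(k+1))-h_K(X''(k))\bigr]+\epsilon\bigl[\mathcal{J}^{\rm Ar,\overline{\mu''^{*}L''(k)}}(X''(k+1))-\mathcal{J}^{\rm Ar,\overline{L''(k)}}(X''(k))\bigr]\ge 0
\]
for all sufficiently small $\gamma,\epsilon>0$ (with the complex metric $h_L(k)$ suppressed from the notation).

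The key observation is that both bracketed quantities are finite real numbers and are independent of the choice of hermitian metric at the complex place. Indeed, $X''(k+1)$ and $X''(k)$ share the same generic fiber and, by construction, agree away from the single finite place $\mathfrak p_k$ (where $\mu''\colon X''(k+1)\to X''(k)$ is the birational morphism used in the patching). Hence all archimedean contributions to the Gillet--Soul\'e intersection numbers appearing in $h_K$ and $\mathcal{J}^{\rm Ar,\bar H}$ cancel between the two models (they reduce to the same integrals over $X_{\mathbb C}$ against the same curvature forms), and only non-archimedean contributions localized at $\mathfrak p_k$ survive. In particular, the $\mathcal J$-bracket remains bounded uniformly as $\gamma,\epsilon\to 0$, so passing to the limit $\gamma\to 0$ (hence $\epsilon\to 0$) in the displayed inequality yields
\[
h_K(X''(k+1))-h_K(X''(k))\ge 0,
\]
which together with the chain of identities \eqref{eq--reduction--1} produces \eqref{eq--keyk}.

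The main technical obstacle is controlling the $\gamma$-dependent rescaling $\bar L\mapsto(\delta^F-\gamma)\bar L$, which makes $L''(k)$, the twisted K\"ahler--Einstein metric $h_L(k)$, and hence the two bracketed quantities themselves all depend on $\gamma$. To pass the inequality to the limit one must check (i) that the $\mathcal{J}$-bracket stays bounded on a neighbourhood of $\gamma=0$, which follows from continuity of Gillet--Soul\'e intersection numbers under the polynomial rescaling of $\bar L$ together with parameter-continuity of the Monge--Amp\`ere equation for the twisted KE metric, and (ii) that the $h_K$-bracket converges to a quantity from which \eqref{eq--keyk} for the polarization of interest can actually be read off. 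Granted these continuity statements, the limiting $\epsilon\cdot\mathcal{J}\to 0$ cleanly isolates the untwisted K-energy inequality \eqref{eq--keyk}.
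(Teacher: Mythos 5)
Your argument is essentially the paper's, phrased contrapositively: the paper assumes \eqref{eq--keyk} fails, passes the resulting strict negativity through the $\epsilon\to 0$ limit and the change-of-metric formula (cf.\ \cite[2.2]{OFal}) to contradict the hypothesis, whereas you run the same two ingredients --- the decomposition $h_{K,\bar H}^{\rm Ar}=h_K^{\rm Ar}+\mathcal{J}^{{\rm Ar},\bar H}$ with linearity of $\mathcal{J}^{\rm Ar}$ in the twist, plus metric-independence of the model-to-model difference --- in the forward direction. The continuity/boundedness concerns you flag under ``main technical obstacle'' are real but equally implicit in the paper's $\lim_{\epsilon\to 0}$ step, so your write-up is correct and faithful to the paper's route, just a touch more verbose about the $\gamma$-dependence.
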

\begin{proof}
Assume that Claim \ref{claim-1} fails.
Then 
\[
h_{K}(X(k+1),L(k+1),h(k))-h_{K}(X(k),L(k),h(k))<0.
\]
Note that 
\begin{align*}
    &\lim_{\epsilon\to0}\bigl(h_{K,\epsilon (\mu''^*L''(k),h(k))}(X''(k+1),L''(k+1),h(k))\\
    &-h_{K,\epsilon(L''(k),h(k))}(X''(k),L''(k),h(k))\bigr)\\
    =&h_{K}(X''(k+1),L''(k+1),h(k))-h_{K}(X''(k),L''(k),h(k)).
\end{align*}
By the above equation and \eqref{eq--reduction--1}, we can take sufficiently small $\epsilon$ such that 
\begin{equation}
h_{K,\epsilon (\mu''^*L''(k),h(k))}(X''(k+1),L''(k+1),h(k))-h_{K,\epsilon(L''(k),h(k))}(X''(k),L''(k),h(k))<0.\label{eq--reduction--2}
\end{equation}
By the change of metric formula (cf., e.g., \cite[2.2]{OFal}), we have that
\begin{align}
    &h_{K,\epsilon (\mu''^*L''(k),h_{L}(k))}(X''(k+1),L''(k+1),h_{L}(k))-h_{K,\epsilon(L''(k),h_{L}(k))}(X''(k),L''(k),h_{L}(k))\nonumber
    \\
    =&h_{K,\epsilon (\mu''^*L''(k),h(k))}(X''(k+1),L''(k+1),h(k))-h_{K,\epsilon(L''(k),h(k))}(X''(k),L''(k),h(k))\nonumber
\end{align}
for any $\epsilon\ge0$.
By the above equation and \eqref{eq--reduction--2}, we have that
$$h_{K,\epsilon (\mu''^*L''(k),h_L(k))}(X''(k+1),L''(k+1),h_L(k))-h_{K,\epsilon(L''(k),h_L(k))}(X''(k),L''(k),h_L(k))<0.$$
This contradicts to the assumption that
\eqref{eq--keytwist} is nonnegative
for any sufficiently small $\epsilon>0$.
We complete the proof of Claim \ref{claim-1}.
\end{proof}
From now, we fix a sufficiently small $\epsilon>0$ and deal with \eqref{eq--keytwist}.
By Lemma \ref{2.12.analogue}, $K_{X''(k)}+\epsilon L''(k)-H(k)=- L''(k)$ and the property of $\omega_{h_L(k)}$, we have that 
\begin{align*}
h_{K,\epsilon (\mu''^*L''(k),h_L(k))}(X''(k+1)&,L''(k+1),h_L(k))\\
&\ge\mathcal{J}^{Ar,\mu''^*\overline{H(k)}}(X''(k+1),L''(k+1),h_L(k))\\
&+\mathcal{D}_{\mu''^*(\epsilon \overline{L''(k)}-\overline{H(k)})}^{Ar}(X''(k+1),L''(k+1),h_L(k))\\
&-(L_{\eta}^n)\log (L_{\eta}^n),\quad \textrm{and}\\
		   h_{K,\epsilon(L''(k),h_L(k))}(X''(k),L''(k),h_L(k)) &=\mathcal{J}^{Ar,\overline{H(k)}}(X''(k),L''(k),h_L(k))\\
     &+\mathcal{D}_{\epsilon \overline{L''(k)}-\overline{H(k)}}^{Ar}(X''(k),L''(k),h_L(k))\\
     &-(L_{\eta}^n)\log (L_{\eta}^n).
\end{align*}
Therefore,
\begin{align*}
&(\ref{eq--keytwist})
\ge\mathcal{J}^{Ar,\mu''^*\overline{H(k)}}(X''(k+1),L''(k+1),h_L(k))-\mathcal{J}^{Ar,\overline{H(k)}}(X''(k),L''(k),h_L(k))\\
		    &+\mathcal{D}_{\mu''^*(\epsilon \overline{L''(k)}-\overline{H(k)})}^{Ar}(X''(k+1),L''(k+1),h_L(k))-\mathcal{D}_{\epsilon \overline{L''(k)}-\overline{H(k)}}^{Ar}(X''(k),L''(k),h_L(k)).
\end{align*}
To show \eqref{eq--keytwist} is nonnegative, it suffices to show the following values are nonnegative:
\begin{align}
    \mathcal{J}^{Ar,\mu''^*\overline{H(k)}}(X''(k+1),L''(k+1),h_L(k))&-\mathcal{J}^{Ar,\overline{H(k)}}(X''(k),L''(k),h_L(k))\label{2}\\
    \mathcal{D}_{\mu''^*(\epsilon \overline{L''(k)}-\overline{H(k)})}^{Ar}(X''(k+1),L''(k+1),h_L(k))&-\mathcal{D}_{\epsilon \overline{L''(k)}-\overline{H(k)}}^{Ar}(X''(k),L''(k),h_L(k)).\label{4}
\end{align}

Next, we apply 
the same arguments as \cite[3.15]{H22} (comparing twisted Arakelov J-energy) 
to show
\eqref{2}
 is nonnegative as we recap as follows. 
 First, let $f(k)\colon X''(k)\to\mathrm{Spec}(\mathcal{O}_F)$ be the canonical morphism. 
 As \cite[3.15]{H22}, we may assume that $E:=\mu''^*L(k)-L''(k+1)$ is an effective divisor supported on $X''(k+1)_{\mathfrak{p}_k}$.
 We note that $(X_{\mathfrak{p}_k},L_{\mathfrak{p}_k})$ is $\mathrm{J}^{H_{\mathfrak{p}_k}}$-semistable and $H_{\mathfrak{p}_k}$ is ample by the choice of $H_{\mathfrak{p}_k}$.
After \cite{H22}, we construct a filtration $\mathscr{F}$ for $(X''(k)_{\mathfrak{p}},L''(k)_{\mathfrak{p}})=(X(k)_{\mathfrak{p}},L(k)_{\mathfrak{p}})$ 
from 
$(X''(k), L''(k))$ and $(X''(k+1),L''(k+1))$: for each $m\ge 0$, we first take the filtration of $f(k)_*(L''(k)^{\otimes m})$
\begin{equation}\label{eq}
  \mathcal{F}^{i}f(k)_*(L''(k)^{\otimes m}):=\left\{
\begin{split}
\mathfrak{p}_k^{i}((f(k)\circ\mu'')_*L''(k+1)^{\otimes m})\cap f(k)_*(L''(k)^{\otimes m}) \quad \mathrm{for} \, i\le 0 \\
0\quad \mathrm{for} \, i> 0,
\end{split}\right.
  \end{equation}
and then we set $$\mathscr{F}^{i}H^0(X(k)_{\mathfrak{p}_{k}},L(k)_{\mathfrak{p}_{k}}^{\otimes m})$$ as the images of $\mathcal{F}^{i}f(k)_*(L''(k)^{\otimes m})\to H^0(X(k)_{\mathfrak{p}_k},L(k)_{\mathfrak{p}_k}^{\otimes m})$. 
It is easy to check that $\mathscr{F}$ satisfies Definition \ref{def--j-filt}.
We set as in \cite[Theorem 3.5]{H22} the following value 
$$w_{\mathscr{F}}(m):=\sum_{i=-\infty}^{\infty}i\cdot\mathrm{length}_{\mathcal{O}_F}(\mathscr{F}^{i}H^0(X(k)_{\mathfrak{p}_{k}},L(k)_{\mathfrak{p}_{k}}^{\otimes m})/\mathscr{F}^{i+1}H^0(X(k)_{\mathfrak{p}_{k}},L(k)_{\mathfrak{p}_{k}}^{\otimes m})).$$
We note that all but finitely many terms in the above sum are zero.
On the other hand, the value \eqref{2} equals to
\begin{align*}
\frac{1}{[F:\mathbb{Q}]L_{\eta}^{n}}\Biggl(&-E\cdot\left(\sum_{j=0}^{n-1}L''(k+1)^j\cdot \mu''^*L''(k)^{n-1-j}\right)\\
&+\frac{nH_{\eta}\cdot L_{\eta}^{n-1}}{(n+1)L_{\eta}^{n}}E\cdot\left(\sum_{j=0}^{n}L''(k+1)^j\cdot \mu''^*L''(k)^{n-j}\right)\Biggr).
\end{align*}
Since the support of $E$ is proper, the above intersection numbers are well-defined.
To show this value is nonnegative, we may assume that $L''(k+1)$ is relatively ample by perturbing the coefficients of $E$. Then, we note that the following claim holds as \cite[Theorem 3.5]{H22}.
We remark that we cannot directly apply \cite[Theorem 3.5]{H22} to obtain the following claim since we assumed there that the base curve $C$ is proper.
\begin{claim}
\label{claim-2}    $$\lim_{m\to\infty}\frac{(n+1)!}{m^{n+1}}w_{\mathscr{F}}(m)=-E\cdot\left(\sum_{j=0}^{n}L''(k+1)^j\cdot \mu''^*L''(k)^{n-j}\right).$$
\end{claim}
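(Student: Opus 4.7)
The plan is to first identify the weight $w_{\mathscr{F}}(m)$ with an elementary-divisor length on the localization at $\mathfrak{p}_k$, and then evaluate that length asymptotically by interpreting it as a sum of dimensions of $H^0$'s on the divisor $E$, to which the Fujita-type results of Lemma \ref{lem-fujita} apply.

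Put $A=\mathcal{O}_{F,(\mathfrak{p}_k)}$ with uniformizer $\pi$, $M_m:=f(k)_{*}(L''(k)^{\otimes m})_{(\mathfrak{p}_k)}$, and $N_m:=(f(k)\circ\mu'')_{*}(L''(k+1)^{\otimes m})_{(\mathfrak{p}_k)}$. Since $L''(k+1)=\mu''^{*}L''(k)-E$ with $E$ effective and $\mu''_{*}\mathcal{O}_{X''(k+1)}=\mathcal{O}_{X''(k)}$ by normality, $N_m\subseteq M_m$ as $A$-lattices in $H^0(X_\eta,L_\eta^{\otimes m})$. Choosing a Smith-normal basis so that $N_m=\bigoplus_j \pi^{a_j}Ae_j$ inside $M_m=\bigoplus_j Ae_j$, unpacking \eqref{eq} gives $\dim_{\kappa}\mathscr{F}^{i}=\#\{j\mid a_j\le -i\}$ for $i\le 0$, where $\kappa=\mathcal{O}_F/\mathfrak{p}_k$, so that
\[
w_{\mathscr{F}}(m)=\sum_{i\le -1} i\cdot\#\{j\mid a_j=-i\}=-\sum_j a_j=-\mathrm{length}_A(M_m/N_m).
\]

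To evaluate $\mathrm{length}_A(M_m/N_m)$ asymptotically, push the short exact sequence $0\to L''(k+1)^{\otimes m}\to\mu''^{*}L''(k)^{\otimes m}\to Q_m'\to 0$ on $X''(k+1)$ forward by $(f(k)\circ\mu'')_{*}$: every sheaf appearing in this argument is supported on some infinitesimal thickening $X''(k+1)\times_{\mathcal{O}_F}\mathcal{O}_F/\mathfrak{p}_k^{N}$, which is proper over the Artinian ring $\mathcal{O}_F/\mathfrak{p}_k^{N}$. After the perturbation of the coefficients of $E$ making $L''(k+1)$ relatively ample (already allowed in the paper), Lemma \ref{lem-fujita}(i) kills every higher direct image and every $H^{>0}$ appearing for $m$ sufficiently large. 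Filtering $Q_m'$ by $\mathcal{G}_i:=L''(k+1)^{\otimes m}(iE)$ with graded pieces $L''(k+1)^{\otimes m}(iE)|_E$ then yields
\[
\mathrm{length}_A(M_m/N_m)=\sum_{i=1}^{m}\dim_{\kappa}H^0\bigl(E,L''(k+1)^{\otimes m}(iE)|_E\bigr).
\]
Applying asymptotic Riemann--Roch on the $n$-dimensional proper $\kappa$-scheme $E$, which follows from Lemma \ref{lem-fujita}(ii), each summand equals $\tfrac{(mL''(k+1)+iE)^{n}\cdot E}{n!}+O(m^{n-1}+i^{n-1})$. A Riemann-sum in $i$, together with the identity $a^{n+1}-b^{n+1}=(a-b)\sum_{j=0}^{n}a^{n-j}b^{j}$ applied to $a=\mu''^{*}L''(k)$, $b=L''(k+1)$, $a-b=E$, and the projection-formula equality $(\mu''^{*}L''(k))^{n+1}=L''(k)^{n+1}$, then produces
\[
\mathrm{length}_A(M_m/N_m)=\frac{m^{n+1}}{(n+1)!}\,E\cdot\sum_{j=0}^{n}L''(k+1)^{j}\cdot\mu''^{*}L''(k)^{n-j}+O(m^n),
\]
and combined with $w_{\mathscr{F}}(m)=-\mathrm{length}_A(M_m/N_m)$ this is the claim.

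The principal obstacle is that $\mathrm{Spec}\,\mathcal{O}_F$ is not proper, so asymptotic Riemann--Roch is not directly available on the arithmetic total space $X''(k)$ and one cannot copy \cite[Theorem 3.5]{H22} verbatim. The remedy is the observation made above: every sheaf entering the length computation is scheme-theoretically supported in an infinitesimal $\mathfrak{p}_k$-neighborhood of the special fiber, which is proper over an Artinian local ring, and this is precisely the setting in which Lemma \ref{lem-fujita} supplies both the vanishing and the Hilbert-polynomial asymptotics required for the reduction to go through.
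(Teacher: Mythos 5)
Your proposal is correct and follows essentially the same route as the paper's proof: reduce $w_{\mathscr{F}}(m)$ to $-\mathrm{length}(M_m/N_m)$, filter by powers of $E$, apply Lemma \ref{lem-fujita} on the Artinian-supported thickening of the special fiber to trade $H^0$ for $\chi$ up to controlled error, and finish with asymptotic Riemann--Roch and a Riemann sum. The only cosmetic difference is that you derive $w_{\mathscr{F}}(m)=-\mathrm{length}_A(M_m/N_m)$ explicitly via Smith normal form over the DVR (valid, as in the paper, for $m$ large enough that cohomology and base change holds), whereas the paper obtains it by the same argument as \cite[Theorem 3.5]{H22}.
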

\begin{proof}
   As the proof of \cite[Theorem 3.5]{H22}, we see that $$w_{\mathscr{F}}(m)=-\mathrm{length}_{\mathcal{O}_F}(f(k)_*(L''(k)^{\otimes m})/(f(k)\circ\mu'' )_*(L''(k+1)^{\otimes m}))$$ for any sufficiently large and divisible $m\in\mathbb{Z}_{>0}$.
   Thus, it suffices to show that 
   \begin{align}
       &\lim_{m\to\infty}\frac{(n+1)!}{m^{n+1}}\mathrm{length}_{\mathcal{O}_F}(f(k)_*(L''(k)^{\otimes m})/(f(k)\circ\mu'')_*(L''(k+1)^{\otimes m}))\label{eq--dim-comparison}\\
       =&E\cdot\left(\sum_{j=0}^{n}L''(k+1)^j\cdot \mu''^*L''(k)^{n-j}\right).\nonumber
   \end{align}
   Note that there exists the following exact sequence of coherent sheaves on $X''(k+1)$ for any $i$ and $m$,
   \[
   0\to\mu''^*L''(k)^{\otimes m}(-(i+1)E)\to\mu''^*L''(k)^{\otimes m}(-iE)\to\mu''^*L''(k)^{\otimes m}(-iE)|_E\to0.
   \]
   Note also that the schematic image structure of $f(k)(E)$ is an Artinian scheme.
   By Lemma \ref{lem-fujita} applied to $(\mu''^*L''(k)^{\otimes m}(-iE))_{\mathfrak{p}_k}$ and $E$, there exists $N>0$ such that $R^j(f(k)\circ\mu'')_*(\mu''^*L''(k)^{\otimes m}(-iE))=0$ around $\mathfrak{p}_k$ and $H^j(E,\mu''^*L''(k)^{\otimes m}(-iE)|_E)=0$ for any sufficiently large $m$, $j>0$ and $N\le i\le m$. Thus, the following injective homomorphism
   \begin{align*}
   &(f(k)\circ\mu'')_*(\mu''^*L''(k)^{\otimes m}(-iE))/(f(k)\circ\mu'')_*(\mu''^*L''(k)^{\otimes m}(-(i+1)E))\\
   &\hookrightarrow H^0(E,\mu''^*L''(k)^{\otimes m}(-iE)|_E)
   \end{align*}
   is bijective and $$\mathrm{length}_{\mathcal{O}_F}(H^0(E,\mu''^*L''(k)^{\otimes m}(-iE)|_E))=\chi(E,\mu''^*L''(k)^{\otimes m}(-iE)|_E)$$ for any $N\le i\le m-1$ and sufficiently large $m$.
   Here, we set 
   \[
   \chi(E,\mu''^*L''(k)^{\otimes m}(-iE)|_E):=\sum_{j=0}^{n}(-1)^j\mathrm{length}_{\mathcal{O}_F}(H^j(E,\mu''^*L''(k)^{\otimes m}(-iE)|_E)).
   \]
   On the other hand, we apply Lemma \ref{lem-fujita} to $H^j(E,\mu''^*L''(k)^{\otimes m}(-iE)|_E)$ and obtain that
  \begin{align*}
  \mathrm{length}_{\mathcal{O}_F}(H^0(E,\mu''^*L''(k)^{\otimes m}(-iE)|_E))=\chi(E,\mu''^*L''(k)^{\otimes m}(-iE)|_E)+O(m^{n-1})
   \end{align*}
   for any $i$ and sufficiently large $m$.
   Note that $\chi(E,\mu''^*L''(k)^{\otimes m}(-iE)|_E)$ is a polynomial of $m$ and $i$ of degree $n$ with the leading term $\frac{m^n}{n!}\left(\mu''^*L''(k)-\frac{i}{m}E\right)^n\cdot E$ (cf.~\cite[Appendix B]{FGA}).
   It means that 
   \begin{align*}
       &\mathrm{length}_{\mathcal{O}_F}(f(k)_*(L''(k)^{\otimes m})/(f(k)\circ\mu'')_*(L''(k+1)^{\otimes m}))\\
       &=\sum_{i=0}^{m-1} \mathrm{length}_{\mathcal{O}_F}(f(k)\circ\mu'')_*(\mu''^*L''(k)^{\otimes m}(-iE))/(f(k)\circ\mu'')_*(\mu''^*L''(k)^{\otimes m}(-(i+1)E))\\
       &=\sum_{i=0}^{m-1}\frac{m^n}{n!}\left(\mu''^*L''(k)-\frac{i}{m}E\right)^n\cdot E+O(m^{n}).
   \end{align*}
   By the dominated convergence theorem, we obtain
   \begin{align*}
   &\lim_{m\to\infty}\frac{(n+1)!}{m^{n+1}}\left(\sum_{i=0}^{m-1}\frac{m^n}{n!}\left(\mu''^*L''(k)-\frac{i}{m}E\right)^n\cdot E+O(m^{n})\right)\\
   =&(n+1)\int^{1}_0\left(\mu''^*L''(k)-xE\right)^n\cdot Edx\\
   =&E\cdot\left(\sum_{j=0}^{n}L''(k+1)^j\cdot \mu''^*L''(k)^{n-j}\right),
   \end{align*}
  which shows \eqref{eq--dim-comparison}.
\end{proof}
Take a sufficiently large integer $a>0$ such that $aH(k)_{\mathfrak{p}_k}$ is very ample. 
Take a discrete valuation ring $R$ dominating $\mathcal{O}_{F,\mathfrak{p}_k}$ such that the residue field $R/\mathfrak{m}_R$ of $R$ is an uncountable algebraically closed field by \cite[Theorem 83]{Ma}.
Here, $\mathfrak{m}_R$ is the maximal ideal of $R$.
Let $X''(k)_R:=X''(k)\times_{\mathrm{Spec}(\mathcal{O}_{F})}\mathrm{Spec}(R)$, $X''(k+1)_R:=X''(k+1)\times_{\mathrm{Spec}(\mathcal{O}_{F})}\mathrm{Spec}(R)$ and $g_R\colon X''(k)_R\to X''(k)$ be the canonical morphism.
Then, we can take a general section $s\in H^0(X''(k)_R,\mathcal{O}_{X''(k)_R}(ag_R^*H(k)))$ such that $\mathrm{div}(s)$ satisfying the following (by the Bertini theorem for very ample divisors and the fact that $R/\mathfrak{m}_R$ is uncountable):
\begin{itemize}
    \item  $\mathrm{div}(s)|_{\mathfrak{m}_R}$ is reduced,
    \item $\mathrm{div}(s)|_{\mathfrak{m}_R}$ is compatible with an approximation $\{\mathscr{F}_{(l)}\}_{l>0}$ of the filtration $\mathscr{F}_R$, which is defined by $$\mathscr{F}_R^{\lambda}H^0((X''(k)_R)_{\mathfrak{m}_R},(L''(k)_R)_{\mathfrak{m}_R}^{\otimes m}):=\mathscr{F}^{\lambda}H^0(X(k)_{\mathfrak{p}_k},L(k)_{\mathfrak{p}_k}^{\otimes m})\otimes_{\mathcal{O}_F/\mathfrak{p}_k}(R/\mathfrak{m}_R)$$
    (for the definition of the approximation $\{\mathscr{F}_{(l)}\}_{l>0}$, we refer to Definition \ref{def--j-filt}), and
    \item the support of $\mu_R''^*\mathrm{div}(s)$ contains no $\mu_R''$-exceptional divisor, where $\mu_R''\colon X''(k+1)_R\to X''(k)_R$ is the morphism induced by $\mu''$.
\end{itemize}
The last condition implies that $\mu_R''^*\mathrm{div}(s)$ is reduced at all points of $(X''(k+1)_R)_{\mathfrak{m}_R}$ of codimension one.
Thus, the reduced structure $\mu_R''^*\mathrm{div}(s)_{\mathrm{red}}$ of $\mu_R''^*\mathrm{div}(s)$ is flat over $R$ and isomorphic to $\mathrm{div}(s)$ over the generic point of $\mathrm{Spec}(R)$.
We construct a filtration $\mathscr{F}_{\mathrm{div}(s)}H^0(\mathrm{div}(s)_{\mathfrak{m}_R},(L''(k)_R)^{\otimes m}|_{\mathrm{div}(s)_{\mathfrak{m}_R}})$ for $\mathrm{div}(s)$ and $\mu_R''^*\mathrm{div}(s)_{\mathrm{red}}$ as \eqref{eq}.
By Claim \ref{claim-2} applied to $\mathscr{F}_{\mathrm{div}(s)}$, we obtain that
\[
\eqref{2}=\lim_{m\to\infty}\frac{1}{[F:\mathbb{Q}]L_{\eta}^{n}}\left(\frac{n!w_{\mathscr{F}_{\mathrm{div}(s)}}(m)}{am^n}-\frac{nH_{\eta}\cdot L_{\eta}^{n-1}}{(n+1)L_{\eta}^{n}}\frac{(n+1)!w_{\mathscr{F}}(m)}{m^{n+1}}\right).
\]
Then, the same discussion as \cite[3.8, 3.15]{H22} shows that (for the definition of $\mathcal{J}^{H,\mathrm{NA}}(\mathscr{F}_R)$, we refer to Lemma \ref{lem-j-filt})
\[
\lim_{m\to\infty}\left(\frac{n!w_{\mathscr{F}_{\mathrm{div}(s)}}(m)}{am^n}-\frac{nH_{\eta}\cdot L_{\eta}^{n-1}}{(n+1)L_{\eta}^{n}}\frac{(n+1)!w_{\mathscr{F}}(m)}{m^{n+1}}\right)\ge\mathcal{J}^{H,\mathrm{NA}}(\mathscr{F}_R).
\]
By the construction of $\mathscr{F}_R$, we have $\mathcal{J}^{H,\mathrm{NA}}(\mathscr{F}_R)=\mathcal{J}^{H,\mathrm{NA}}(\mathscr{F})$.
On the other hand, $\mathcal{J}^{H,\mathrm{NA}}(\mathscr{F})\ge0$ by Lemma \ref{lem-j-filt}.
Summarizing them, we obtain
\[
\eqref{2}\ge\frac{1}{[F:\mathbb{Q}]L_{\eta}^{n}}\mathcal{J}^{H,\mathrm{NA}}(\mathscr{F}_R)\ge0.
\]

Finally, we apply the same arguments as \cite[3.19]{H22}
to show 
\eqref{4}
is nonnegative  
by using a recent variant of inversion of adjunction due to \cite[Theorem 3.8]{SatoTakagi} via 
the 
theory of F-singularities. 
More precisely speaking, we discuss as follows. 
By making use of Claim \ref{claim-2} instead of \cite[Theorem 3.5]{H22}, we apply the same argument as the proof of \cite[3.19]{H22} and obtain the following estimate:
\begin{align*}
    \eqref{4}\ge\liminf_{l\to\infty}\inf_{D}\mathrm{lct}(X,X_{\mathfrak{p}_k}+D;X_{\mathfrak{p}_k}),
\end{align*}
where $D$ runs over all effective $\mathbb{Q}$-Cartier $\mathbb{Q}$-divisors such that the support of $D$ does not contain $X_{\mathfrak{p}_k}$ and $D_{\mathfrak{p}_k}$ is an $l$-basis type divisor with respect to $L''(k)_{\mathfrak{p}_k}$ where $l$ is sufficiently large.
Since we assumed that $\delta^F(\overline{L_{\mathfrak{p}_k}})>1$ in the fourth paragraph of this proof, $\delta^F(L_{\mathfrak{p}_k})>1$ also holds.
 Therefore, it follows from 
\cite[Theorem 3.8]{SatoTakagi} that 
$(X''(k),X''(k)_{\mathfrak{p}_k}+D)$ is 
log canonical 
for any effective $\mathbb{Q}$-Cartier $\mathbb{Q}$-divisor $D$ whose support does not contain $X_{\mathfrak{p}_k}$ and whose restriction $D_{\mathfrak{p}_k}$ to $X''(k)_{\mathfrak{p}_k}$ is an $l$-basis type divisor with respect to $L''(k)_{\mathfrak{p}_k}$ where $l$ is sufficiently large.  
Thus, we have
\[
\liminf_{l\to\infty}\inf_{D}\mathrm{lct}(X,X_{\mathfrak{p}_k}+D;X_{\mathfrak{p}_k})\ge0,
\]
which shows that \eqref{4} is nonnegative.
Since \eqref{2} and \eqref{4} are nonnegative, so is \eqref{eq--keytwist}.
Therefore, we complete the proof that 
\eqref{eq--keyk} holds in Case~1.

{\it Case~2}.~We deal with the case when $\delta^F(\overline{L_{\mathfrak{p}_k}})=0$.
By \eqref{eq--reduction--1}, we may replace the models $(X(k),L(k),h(k))$ and $(X(k+1),L(k+1),h(k))$ with $(X''(k),L''(k),h(k))$ and $(X''(k+1),L''(k+1),h(k))$ respectively.
We have that
\begin{align*}
    &\lim_{\epsilon\to0}\bigl(\mathcal{J}^{Ar,\mu''^*\overline{K_{X''(k)}+\epsilon L''(k)}}(X''(k+1),L''(k+1),h(k))\\
    &-\mathcal{J}^{Ar,\overline{K_{X''(k)}+\epsilon L''(k)}}(X''(k),L''(k),h(k))\bigr)\\
    &=\mathcal{J}^{Ar,\mu''^*\overline{K_{X''(k)}}}(X''(k+1),L''(k+1),h_L(k))-\mathcal{J}^{Ar,\overline{K_{X''(k)}}}(X''(k),L''(k),h_L(k)).
\end{align*}
By \eqref{2} and the above equation, we have that
\[
\mathcal{J}^{Ar,\mu''^*\overline{K_{X''(k)}}}(X''(k+1),L''(k+1),h_L(k))-\mathcal{J}^{Ar,\overline{K_{X''(k)}}}(X''(k),L''(k),h_L(k))\ge0.
\]
Since $\overline{X_{\mathfrak{p}_k}}$ is locally F-pure, so is $X_{\mathfrak{p}_k}$.
By the same argument of the proof of Case 2 in \cite[Theorem 3.20]{H22} and the local F-purity of $X_{\mathfrak{p}_k}$, we have that 
\begin{align*}
&h_K(X''(k+1),L''(k+1),h(k))-h_K(X''(k),L''(k),h(k))\\
\ge&\mathcal{J}^{Ar,\mu''^*\overline{K_{X''(k)}}}(X''(k+1),L''(k+1),h_L(k))-\mathcal{J}^{Ar,\overline{K_{X''(k)}}}(X''(k),L''(k),h_L(k))
\ge0.
\end{align*}
This shows that $\eqref{eq--keyk}$ also holds in this case.
We complete the proof of Theorem \ref{hK.min} by the argument of the third paragraph of this proof.
\end{proof}


\begin{ack}
This work is partially 
supported by JSPS KAKENHI 	22J20059  
(Grant-in-Aid for JSPS Fellows DC1) 
for M.H. 
and 
KAKENHI 16H06335 (Grant-in-Aid for Scientific Research (S)), 
20H00112 (Grant-in-Aid for Scientific Research (A)), 
21H00973 (Grant-in-Aid for Scientific Research (B)) 
and 
KAKENHI 18K13389 (Grant-in-Aid for Early-Career Scientists) for Y.O. We 
thank Shunsuke Takagi and 
Shou Yoshikawa for useful comments. 
We also thank Teppei Takamatsu and Swaraj Pande 
for allowing us to write 
Example \ref{ex} \eqref{ex4}. 
We would like to thank the referee for the  
careful reading and suggestions. 
\end{ack}


\vspace{3mm}
\noindent 
{\tt hattori.masafumi.47z@st.kyoto-u.ac.jp}, \\
{\tt yodaka@math.kyoto-u.ac.jp} \\ 
Department of Mathematics, Kyoto university, 
Kyoto 606-8502, Japan \\ 

\end{document}